\theoremstyle{plain}
\newtheorem*{principle}{Principle}
\newcommand{\R}{  \mathbb{R}   }
\newcommand{\eps}{\varepsilon}
\newcommand{\e}{  \text{e}   }
\newcommand{\Z}{  \mathbb{Z}   }
\newcommand{\N}{  \mathbb{N}   }
\newcommand{\J}{  \mathcal{J}   }
\newcommand{\U}{  \mathcal{U}   }
\newcommand{\A}{  \mathcal{A}   }
\newcommand{\T}{  \S^{1} }
\newcommand{\Tc}{  \mathbb T}
\newcommand{\Mc}{  \mathcal{M}   }
\newcommand{\dis}{\displaystyle}
\newcommand{\om}{  \omega   }
\newcommand{\ov}{  \overline  }
\newcommand{\ga}{  \gamma  }
\renewcommand{\a}{  \alpha   }
\renewcommand{\b}{  \beta   }
\renewcommand{\phi}{  \varphi  }
\renewcommand{\L}{  \mathcal{L}   }
\newcommand{\wh}{  \widehat   }
\newcommand{\cp}[1]{  \big\{\,{ #1 }\, \big\}  }
\renewcommand{\S}{  \mathbb{S}  }
\numberwithin{equation}{section}
 \author{ Beno\^it Gr\'ebert}
\email{benoit.grebert@univ-nantes.fr}
 \author{ \'Eric Paturel}
\email{eric.paturel@univ-nantes.fr}
\author{ Laurent Thomann }
\email{laurent.thomann@univ-nantes.fr} 
\address{Laboratoire de Math\'ematiques J. Leray, Universit\'e de Nantes, UMR CNRS 6629\\
2, rue de la Houssini\`ere \\
44322 Nantes Cedex 03, France.}
\title[Beating effects in cubic Schr\"odinger systems]
{Beating effects in cubic Schr\"odinger systems and growth of Sobolev norms}
\begin{document}

\begin{abstract}
We consider the following coupled cubic Schr\"odinger equations 
\begin{equation*} 
\left\{
\begin{aligned}
&i\partial_t u+\partial_{x}^{2}u  = \eps^2 |v|^{2}u,\quad
(t,x)\in\R\times \S^{1},\\
&i\partial_t v+\partial_{x}^{2}v  = \eps^2 |u|^{2}v \,.
\end{aligned}
\right.
\end{equation*} 
We prove that there exists a beating effect, i.e. an energy exchange between different modes. This construction may be transported  to the linear time-dependent Schr\"odinger equation:  we build solutions such that their Sobolev norms grow logarithmically.  All these results are stated for large but finite times.
    \end{abstract}

\begin{altabstract} Nous consid\'erons le syst\`eme d'\'equations de Schr\"odinger coupl\'ees 
\begin{equation*} 
\left\{
\begin{aligned}
&i\partial_t u+\partial_{x}^{2}u  = \eps^2 |v|^{2}u,\quad
(t,x)\in\R\times \S^{1},\\
&i\partial_t v+\partial_{x}^{2}v  = \eps^2 |u|^{2}v.
\end{aligned}
\right.
\end{equation*} 
Nous montrons l'existence d'un effet de battement, c'est-\`a-dire un \'echange d'\'energie entre des modes diff\'erents.  Cette construction peut \^etre transpos\'ee  pour l'\'equation de Schr\"odinger lin\'eaire non autonome, ce qui permet de construire des solutions dont les normes de Sobolev croissent logarithmiquement (inflation de normes). Tous ces r\'esultats sont \'etablis pour des temps grands mais finis.
   \end{altabstract}

\keywords{Nonlinear Schr\"odinger system, Resonant normal form, 
energy exchange, Linear Schr\"odinger equation, Time-dependent potential, Norm inflation. }
\altkeywords{Forme Normale, Equation de Schr\"odinger non lin\'eaire,
r\'esonances, \'echange d'\'energie, Equation de Schr\"odinger lin\'eaire avec potentiel d\'ependant du temps, croissance de norme.}\frontmatter
\subjclass{37K45, 35Q55, 35B34, 35B35}
\thanks{ The three authors were supported in part by the  grant ANR-10-JCJC 0109.}

\maketitle

\section{Introduction}
\subsection{General introduction} Denote by  $\S^{1}=\R/2\pi\Z$ the circle, and let $\eps>0$ be a small parameter. In this paper we are concerned with the following cubic coupled non linear Schr\"odinger equations
\begin{equation}\label{cauchy}  
\left\{
\begin{aligned}
&i\partial_t u+\partial_{x}^{2}u  =\eps^2 |v|^{2}u,\quad
(t,x)\in\R\times \S^{1},\\
&i\partial_t v+\partial_{x}^{2}v  = \eps^2|u|^{2}v,\\
&u(0,x)=  u_{0}(x), \quad v(0,x)=  v_{0}(x)\,.
\end{aligned}
\right.
\end{equation}

We exhibit some solutions of this system which stay close to solutions of a finite dimensional {\em nonlinear} system for long times. We stress out that these solutions are not obtained by perturbations of the associated linear system.  Thanks to the nonlinearity, we may produce a beating effect, i.e. a transfer of energy between two different modes, something which is not possible in the linear case.  The solutions of the initial system are then found thanks to a resonant Birkhoff normal form and approximation arguments, and they enjoy the same beating properties as those of the reduced system. This phenomenon heavily relies on the presence of resonances. Actually, Bambusi and Gr\'ebert \cite{BG06} showed that, in the non-resonant setting (e.g. adding a typical potential in each equation of \eqref{cauchy}), the dynamics stays close to linear for long times (see the introduction of \cite{GT2}).

This new example leans on a principle that was already used in \cite{GV} and \cite{GT2}: we make it explicit in Section 2, where the conditions for applying our method on different resonant Hamiltonian PDEs are enumerated. 

The control of Sobolev norms in Hamiltonian PDEs has a long story, both in the nonlinear and the linear time-dependant setting. Concerning nonlinear equations, one of the most outstanding results is due to \cite{CKSTT} for the cubic 2-dimensional NLS, recently completed by \cite{GuKa}, where there is a construction of specific solutions which exhibit a polynomial growth of Sobolev norms for large {\em finite} times. 

In the linear setting, 
Bourgain \cite{Bou99a} proves a polynomial bound of the Sobolev norm of the solution $u$ of 
\begin{equation*}
i\partial_t u+\partial_{x}^{2}u  +V(t,x)u=0,\quad
(t,x)\in\R\times \S^{1},
\end{equation*}
where  $V(t,x)$ is a bounded (real analytic) potential. Moreover, when the potential is  quasi-periodic in time  he obtains  in \cite{Bou99b} a logarithmic bound. This last result has been enhanced by Delort \cite{Delort} and Wang \cite{Wang2008}, who gets a logarithmic bound for bounded potentials.  As a by-product of our work, we recover a result of Bourgain \cite{Bou99b}, who showed that these logarithmic bounds are optimal in the case of analytic potentials (see Section \ref{Sect4}). Note that it is possible to obtain a growth of higher order (but still logarithmic) when considering potentials in Gevrey classes (as in Fang-Zhang \cite{FangZhang}), or even a sub-polynomial growth in the case of ${\mathcal C}^\infty$ potentials. 

\subsection{Beating effect in the system \eqref{cauchy}}
Our first result concerns the dynamics of \eqref{cauchy}.
  \begin{theo}\label{thm1}
   For all $0<\gamma<1/2$, there exist $0<T_{\gamma}< C| \ln \gamma| $, a $2T_{\gamma}-$periodic function $K_{\gamma}:\R\longmapsto ]0,1[$ which satisfies $K_{\gamma}(0)=\gamma$ and $K_{\gamma}(T_{\gamma})=1-\gamma$, and there exists $\eps_{0}>0$ so that if $p,q\in \Z$ and if $0<\eps<\min(\eps_{0}, \gamma^{2})$, there exists a solution to \eqref{cauchy} satisfying for all $ |t|\leq \eps^{-3}$
   \begin{equation}\label{descr} 
   \begin{array}{lll}
 u(t,x)&=&u_{p}(t)\e^{ipx}+u_{q}(t)\e^{iqx}+ \eps^{1/2} r_u(t,x)\,,\\[5pt]
  v(t,x)&=&v_{p}(t)\e^{ipx}+v_{q}(t)\e^{iqx}+\eps^{1/2} r_v(t,x)\,,
  \end{array}
  \end{equation}
\vspace{-0,4 cm}with 
\begin{equation*} 
\begin{array}{ccccc}
 |u_{q}(t)|^{2}&=& |v_{p}(t)|^{2}&=&K_{\gamma}(\eps^{2} t)\\[5pt]
 |u_{p}(t)|^{2}&=& |v_{q}(t)|^{2}&=&1-K_{\gamma}(\eps^{2} t),
 \end{array}
 \end{equation*}
 and where $r_{u}$ and $r_{v}$ are:   
 \begin{itemize}
 \item smooth in time  and    analytic in space on $[-\eps^{-3},\eps^{-3}]\times \S^{1} $.\\
 \item for $r=r_{u},r_{v}$ the Fourier coefficients $\wh{r}_{j}(t)$ of $r(t)$ satisfy for some $\rho>0$
\begin{equation*}
\sup_{|t|\leq \eps^{-3}}|\wh{r}_{j}(t)|\leq C \e^{-\rho |j|},
\end{equation*} 
 uniformly in  $\eps>0$ and $p,q\in \Z$.
 \end{itemize}
 \end{theo}

This statement shows an exchange of energy between the modes $p$ and $q$: the mode $u_q$ and $v_p$ grow from $\gamma$ to $1-\gamma$ in time $t=\eps^{-2}T_\gamma$. Considering the larger time scales $\eps^{-3}$, we obtain a periodic phenomenon which we will call {\em beating effect}.
 
Of course the solutions satisfy  the three conservation laws: the mass, the momentum and the energy are constant quantities.  \\
$\bullet$ Conservation of the mass: $\dis \int |u|^{2}$ and $\dis \int |v|^{2}$
\begin{equation}\label{mass}
|u_{q}|^{2}+|u_{p}|^{2} = cst, \quad |v_{q}|^{2}+|v_{p}|^{2}=cst.
\end{equation}
$\bullet$ Conservation of the momentum: $\dis \text{Im}\,\int \ov{u}\partial_{x} u+\text{Im}\,\int \ov{v}\partial_{x} v$
\begin{equation}\label{moment}
q|u_{q}|^{2}+p|u_{p}|^{2}+q|v_{q}|^{2}+p|v_{p}|^{2}=cst .
\end{equation}
$\bullet$ Conservation of the energy:  $\dis \int |\partial_{x}u|^{2}+ |\partial_{x}v|^{2}+\eps^{2}\int|u|^{2}|v|^{2}$  
\begin{equation}\label{energy}
q^{2}|u_{q}|^{2}+p^{2}|u_{p}|^{2}+q^{2}|v_{q}|^{2}+p^{2}|v_{p}|^{2}=cst.
\end{equation}
On the other hand, the solutions given by Theorem \ref{thm1} satisfy for $0\leq t\leq \eps^{-3}$ and $s\geq 0$
 \begin{equation}\label{uHs}
\|u(t,\cdot)\|^{2}_{H^{s}}=(q^{2s}-p^{2s}){K_{\gamma}(\eps^{2} t)}+p^{2s}+\mathcal{O}(\eps).
\end{equation}
In particular, this norm does not remain constant in time, which is a true nonlinear effect.  However, the sum $\|u(t,\cdot)\|^{2}_{H^{s}}+\|v(t,\cdot)\|^{2}_{H^{s}}$ remains almost constant and thus \eqref{uHs} cannot be interpreted as a norm inflation. Nevertheless this effect will be used in the linear case (cf. Theorem \ref{thm3}).
  
  \begin{rema}
  For the defocusing-focusing system
   \begin{equation}\label{cauchy-} 
\left\{
\begin{aligned}
&i\partial_t u+\partial_{x}^{2}u  = \eps^{2} |v|^{2}u,\quad
(t,x)\in\R\times \S^{1},\\
&i\partial_t v+\partial_{x}^{2}v  =-\eps^{2} |u|^{2}v,\\
&u(0,x)= u_{0}(x), \quad v(0,x)=   v_{0}(x),
\end{aligned}
\right.
\end{equation} 
one can show a beating effect only for the case $q=-p$   (see also Remark \ref{rema}).
  \end{rema}

 \subsection{Growth of Sobolev norms in linear Schr\"odinger equations}
 
Theorem \ref{thm1} allows us to build real time-dependent potentials $V(t,x)$ for the following linear Schr\"odinger equation
\begin{equation}\label{lin}
i\partial_t u+\partial_{x}^{2}u  +V(t,x)u=0,\quad
(t,x)\in\R\times \S^{1}.
\end{equation}

Namely, in \eqref{cauchy} we consider the solution $v$ as a given function and we set $V(t,x) = - \eps^2 |v(t,x)|^2$. For $\alpha\geq 1$ we define the Gevrey class $\mathcal{G}_\alpha(\T)$ as the set of functions $f \in \mathcal{C}^\infty(\T)$ satisfying, for some $A>0$ and $C>0$:
\begin{equation*}
\sup_{x\in \S^{1}}|f^{(n)}(x)| \leq C A^n (n!)^{\alpha }, \quad \forall\, n \in \N\,.
\end{equation*}
In the periodic setting, an equivalent formulation is available (see \cite{yoshiko}): a function $f \in \mathcal{C}^\infty(\T)$ is in $\mathcal{G}_\alpha(\T)$ if, for some $K>0$ and $B>0$, we have for any $j \in \Z$,
\begin{equation}\label{gevrey}
|\hat{f}_j| \leq K e^{-B |j|^{1/\alpha}}\,,
\end{equation}
where $(\hat{f}_j)_{j \in \Z}$ denote the Fourier coefficients of $f$. We then define a semi-norm $ \|f\|_{\mathcal{G}_\alpha}$ as the best constant $K$ in \eqref{gevrey}  (see \cite{yoshiko} for more details).

The beating phenomenon then leads to the growth of Sobolev norms (for finite but arbitrary large times) for some solutions of this equation. Obviously, since $V$ is a real potential, the $L^2$ norm of any solution of \eqref{lin} is constant. However, we are able to prove 

 
 \begin{theo}\label{thm3}
Fix $s >0$ and $\alpha \geq 1$. There exist a sequence of real potentials $V_q(t,x)$, a sequence of initial conditions $(u_0^q)$ and a sequence of times  $T_q \longrightarrow +\infty$ as $q \longrightarrow +\infty$ such that
\begin{itemize}
\item The potentials are smooth in time, real analytic in space and uniformly bounded in Gevrey classes:
$$\forall\, q\in \Z,\;\forall\, t \in [0,T_q], \quad \|V_q(t,.)\|_{\mathcal{G}_\alpha} \leq C_{\alpha},$$
\item $\|u_0^q\|_{H^s} = 1,$
\item The corresponding solutions to the Cauchy problem $u^q(t,.)$ are real analytic in space for $t \in [0, T_q]$,
\item There exists a constant $C_{\alpha,s}$ depending only on $\alpha$ and $s$ such that
 $$ \|u^q(T_q)\|_{H^s} \geq C_{\alpha,s} (\ln T_q)^{s \alpha}\,.$$
\end{itemize}
\end{theo}
 
 This can be compared to the result obtained in the analytic case (both in time and space variables) by Wang \cite{Wang2008}, who proves that if the potential $V$ is real analytic in $(t,x)$ in a band $D=(\R + i\rho)^2$, real and bounded in $\R^2$, then, given any $s>0$, there exists $C_s$ such that
$$\|u(t)\|_{H^s} \leq C_s \big( \log(|t|+2)\big)^{\kappa s} \|u_0\|_{H^s}\,,$$
where $\kappa$ is a constant independent of $s, u_0$.

 Bourgain also showed in \cite{Bou99b} that  $\dis \|u\|_{H^{s}}\leq C(\ln t )^{Cs} \|u_{0}\|_{H^{s}}$ when $V$ is analytic and quasi-periodic in time.  This has been extended by  D. Fang- Q. Zhang \cite{FangZhang}. If $V$ is $\mathcal{C}^{\infty}$ but not supposed to be quasi-periodic in time, Bourgain \cite{Bou99a} proves the bound $\|u\|_{H^{s}}\leq C_{\eta} t^{\eta}\|u_{0}\|_{H^{s}}$, for all $\eta>0$. See also the nice generalisation by J.-M. Delort \cite{Delort}.
 \subsection{Plan of the paper}
 
 We describe in Section \ref{Sect2} the normal form method used to extract from the infinite dimensional Hamiltonian system a {\em nonlinear} finite dimensional  and {\em integrable} system which will drive our solutions. The solutions of this small system are then studied in Section \ref{Sect3}. The proof of Theorem \ref{thm1}  then relies on the control of the other terms in the initial Hamiltonian system, that is the topic of Section \ref{Sect4}. Finally  in Section \ref{Sect5} we prove Theorem \ref{thm3}.

\section{The normal form}\label{Sect2}

\subsection{Principle of the result}

In this section, we formalise the principle already used in \cite{GV} and \cite{GT2}, in order to follow for arbitrary long times solutions of an integrable model equation. The system has to be Hamiltonian: let $H$ be the Hamilton function describing its dynamics on some Hilbert phase space. We assume that $H$ is smooth in a neighbourhood of the origin, and that  its Taylor expansion is given by
\begin{equation*}
H = N + Z_p^{res} + Z_p^{nr} + R_{p+1} \,,
\end{equation*}
where 
\begin{itemize}
\item $N$ is a homogeneous polynomial of order 2, coming from the linear part of the system. It usually gathers the linear actions, i.e. the first integrals of the linearized system at the origin, which may be easily written in action-angle coordinates thanks to a Fourier transform for instance. We take the following form for clarity:
$$ N = \sum_{j \in \Z} \lambda_j I_j\,,$$
where the $\lambda_j$ are the eigenvalues of the linearization at 0 of the system. We suppose that $\lambda_j$ grows polynomially with $j$: $\lambda_j \sim |j|^r$, with $r >1$.
\item For $p$ an even integer, $Z_p^{res} + Z_p^{nr}$ is the next nonzero term in the Taylor expansion of $H$. It is a homogeneous polynomial of degree $p$.  We distinguish between {\em resonant} and {\em nonresonant} terms in the sense of Birkhoff normal forms: a monomial $M$ of degree $p$ is called resonant if it commutes with $N$, i.e. $\cp{M,N}=0$. On the contrary case, a nonresonant monomial may be removed by one step of Birkhoff normal form (see Proposition \ref{NF}): we suppose that a Birkhoff normal form is available for the system in a ball $B$ centred at the origin.
\item $R_{p+1}$ is an analytic Hamiltonian which vanishes at the origin up to order $p+1$. 
\end{itemize}
In order to observe some beating effect, we have to focus on the resonant part. Suppose that we may decompose 

$$N+Z_p^{res} = H^\square + N^{ext}+ Z_{p,1}+ Z_{p,2} + Z_{p,>2}\,,$$

where
\begin{itemize}
\item $H^\square$, defining the reduced Hamiltonian system, depends only on finitely many variables (indexed by $j \in {\mathcal A}$), called the {\it internal modes},
\item $N^{ext}$ contains all the monomials of $N$ depending on the {\it external modes}, i.e. the variables indexed by $j \not\in {\mathcal A}$,
\item $Z_{p,1}$ gathers monomials involving exactly one external mode,
\item $Z_{p,2}$ gathers monomials involving exactly two external modes,
\item $Z_{p,>2}$ gathers monomials involving at least three external modes.
\end{itemize}

We may now write the principle already used in \cite{GV} and \cite{GT2}, put in light again in this paper. This brings together the assumptions needed to exhibit beating phenomena for Hamiltonian PDEs using our method.

\begin{principle}
If the following assumptions are fulfilled:
\begin{itemize}
\item $H^\square$ defines a {\em completely integrable} Hamiltonian system, 
\item  $t \mapsto (q(t),p(t))$ is  a solution of the reduced system satisfying  that for every $t$, $(q(t),p(t))$ stays in the ball $B$,
\item $Z_{p,1} =0$, i.e. resonances cannot light on one single outer mode,
\item $\cp{N^{ext},Z_{p,2}} = 0$, i.e. $Z_{p,2}$ does not affect the external modes.
\item There exists a strictly convex combination of the $(I_j)_{j \in \mathcal{A}}$ denoted by $\mathcal{I}$ such that $\cp{\mathcal{I},H^\square}=0$.
\end{itemize}
Then there exists solutions of the system governed by $H$ which follow $(q(t),p(t))$ for long times, i.e. their projection on the reduced phase space stay close to $(q(t),p(t))$  and the difference between the solution and its projection stays small, for long times. 
\end{principle}

The beating effect is then obtained when we are able to construct  a periodic solution $t \mapsto (q(t),p(t))$ of the reduced system. Note that the 2D cubic NLS equation enters in this setting when considering "small squares" of indices, e.g. I = \{(0,0),(1,0),(0,1),(1,1)\} in the Fourier modes decomposition. We do not write the details.

\subsection{Hamiltonian formulation and Birkhoff normal form} To apply a normal form procedure it is convenient to transform the original system where the nonlinear term is small into a system where the solutions are small.
Namely by an obvious change of variable, \eqref{cauchy} is equivalent to the system
\begin{equation}\label{cauchy44}  
\left\{
\begin{aligned}
&i\partial_t u+\partial_{x}^{2}u  = |v|^{2}u,\quad
(t,x)\in\R\times \S^{1},\\
&i\partial_t v+\partial_{x}^{2}v  = |u|^{2}v,\\
&u(0,x)= \eps u_{0}(x), \quad v(0,x)= \eps v_{0}(x)\,.
\end{aligned}
\right.
\end{equation}

Denote by 
\begin{equation*}
H=\int |\partial_{x}u|^{2}+|\partial_{x}v|^{2}+\int|u|^{2}|v|^{2},
\end{equation*}
 the Hamiltonian of \eqref{cauchy44} with the symplectic structure $\text{d} u\wedge \text{d}\ov u+ \text{d} v\wedge \text{d}\ov v$. In other words, \eqref{cauchy44} is equivalent to 
\begin{equation}\label{cauchy*} 
\left\{
\begin{aligned}
&\dot u=-i\frac{\delta H}{\delta \ov u},& \quad &\dot{\ov u}=i\frac{\delta H}{\delta u},\\
&\dot v=-i \frac{\delta H}{\delta \ov v},& \quad &\dot{\ov v}=i \frac{\delta H}{\delta v}.
\end{aligned}
\right.
\end{equation} 
Let us expand $u, \bar u, v$ and $\bar v$ in Fourier modes:
\begin{eqnarray*}
u(x)=\sum_{j\in \Z}\a_j\e^{ijx},& \dis \bar  u(x)=\sum_{j\in\Z} \ov \a_j\e^{-ijx},\\
v(x)=\sum_{j\in \Z}\b_j\e^{ijx},& \dis \bar v(x)=\sum_{j\in\Z} \ov \b_j\e^{-ijx},
\end{eqnarray*}
We define
\begin{equation*}
P(\a,\b)= \int_{\T} |u(x)|^{2}|v(x)|^2 \text{d}x =  \sum_{\substack{j,\ell\in \Z^2\\ \mathcal M(j,\ell)=0}}\a_{j_1}\ov \a_{j_2}\b_{\ell_1}\ov\b_{\ell_2},
 \end{equation*}
where $\Mc(j,\ell)= j_1-j_2+\ell_1-\ell_2$ denotes the momentum of the multi-index $(\a,\b)\in \Z^{4}$. \\
In this Fourier setting the equation \eqref{cauchy*} reads as an infinite Hamiltonian system
\begin{equation}\label{hamsys}
\left\{\begin{aligned}
i\dot \a_j&=j^2\a_j+\frac{\partial P}{\partial \ov \a_j}= \frac{\partial H}{\partial \ov \a_j}, &-i\dot {\ov \a_j}&=j^2\ov \a_j+\frac{\partial P}{\partial \a_j}= \frac{\partial H}{\partial  \a_j}, &\; j\in \Z,\\
i\dot \b_j&= j^2\b_j+\frac{\partial P}{\partial \ov \b_j}=\frac{\partial H}{\partial \ov \b_j}, &-i\dot {\ov \b_j}&= j^2\ov \b_j+\frac{\partial P}{\partial \b_j}= \frac{\partial H}{\partial  \b_j}, &\; j\in \Z  .
\end{aligned}\right.
\end{equation}

For $\rho>0$, we consider   the following  phase space  
$$\mathcal{F}_\rho=\big\{\, (\a,\b)\in \big(\ell^1(\Z)\big)^{4}, s.t.\; \|(\a,\b)\|_\rho:=\sum_{j\in \Z}e^{\rho |j|}(|\a_j|+|\b_j|)<\infty \, \big\},
$$
which  we endow with the canonical symplectic structure $\dis -i\sum_j (\text{d}\a_j\wedge \text{d}\ov\a_j+\text{d}\b_j\wedge \text{d}\ov\b_j)$. 
According to this  structure, the Poisson bracket between two functions $f$ and $g$ of $(\alpha,\bar{\alpha},\beta,\bar{\beta})$ is defined by
$$\cp{f,g}={-i}\sum_{j\in\mathbb{Z}}\Big[\frac{\partial{f}}{\partial{\a_j}}\frac{\partial{g}}{\partial{\ov\a_j}}-\frac{\partial{f}}{\partial{\ov \a_j}}\frac{\partial{g}}{\partial{\a_j}}+\big(\frac{\partial{f}}{\partial{\b_j}}\frac{\partial{g}}{\partial{\ov\b_j}}-\frac{\partial{f}}{\partial{\ov\b_j}}\frac{\partial{g}}{\partial{\b_j}}\big)\Big].$$
 It is  convenient to work in the symplectic polar coordinates $\dis \big(\a_{j}=\sqrt{I_{j}}\e^{i\theta_{j}},\ov \a_{j}=\sqrt{I_{j}}\e^{-i\theta_{j}}, \b_{j}=\sqrt{J_{j}}\e^{i\phi_{j}}, \ov \b_{j}=\sqrt{J_{j}}\e^{-i\phi_{j}}\big)_{j\in \Z}$. Since we have $\text{d}\a \wedge\text{d}\ov \a=i\text{d}\theta\wedge \text{d}I$ and $\text{d}\b \wedge\text{d}\ov \b=i\text{d}\phi\wedge \text{d}J$  the system \eqref{cauchy*} is equivalent to 
\begin{equation*}
\left\{\begin{aligned}
\dot \theta_j=&-\frac{\partial H}{\partial I_j}, &\dot I_j=&\frac{\partial H}{\partial \theta_j}, &\quad j\in \Z,\\
\dot \phi_j=&-\frac{\partial H}{\partial J_j}, &\dot J_j=&\frac{\partial H}{\partial \phi_j}, &\quad j\in \Z\,.
\end{aligned}\right.
\end{equation*}

We denote by $B_\rho(r)$ the ball of radius $r$ centred at the origin in $\mathcal{F}_\rho$, and introduce the {\it resonant set}
\begin{equation*} 
{\mathcal R}=\big \{\,(j_1,j_2,\ell_1,\ell_{2})\in\mathbb{Z}^4\;s.t. \mid
 j_1-j_2+\ell_{1}-\ell_{2}=0 \;\; {\rm and } \;\; j_1^2-j_2^2+\ell^{2}_{1}-\ell^{2}_{2}=0\,\big\}\,.
\end{equation*}

\begin{prop}\label{NF}
There exists a canonical change of variable $\tau$ from $B_\rho(\eps)$ into  $B_\rho(2\eps)$ with $\eps$ small enough such that
\begin{equation}\label{ham*}
\ov{H}:=H\circ \tau = N +Z_{4}+R_{6},
\end{equation}
where
\begin{enumerate}[(i)]
\item $N$ is the term $\dis N(I) = \sum_{j\in \Z}j^2(I_{j}+J_{j})$.
\item $\dis Z_4$  is the homogeneous polynomial of degree 4     
\begin{equation*}
Z_{4}=\sum_{\mathcal{R}}\a_{j_{1}}\ov\a_{j_{2}}\b_{\ell_{1}}\ov \b_{\ell_{2}}.
\end{equation*}
In particular, $Z_{4}$ is made of resonant monomials: it satisfies $\cp{Z_{4},N}=0$.
\item  $R_{6}$ is the remainder of order 6, i.e. a Hamiltonian satisfying\\
 $\|X_{R_{6}}(z)\|_\rho \leq C \|z\|^5_\rho$   for $z=(\alpha,\bar{\alpha},\beta,\bar{\beta})\in B_\rho(\eps)$.
 \item  $\tau$ is close to the identity: there exist a constant $C_\rho$ such that $\|\tau(z)-z\|_\rho\le{C_\rho}\|z\|^2_\rho$  for all $z\in B_\rho(\eps).$
\end{enumerate}
\end{prop}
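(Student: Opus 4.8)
The plan is to construct the change of variable $\tau$ as the time-one flow of an auxiliary Hamiltonian $\chi$, following the standard Birkhoff normal form scheme adapted to the analytic phase space $\F_\rho$. First I would write $H = N + P$, where $N(I)=\sum_j j^2(I_j+J_j)$ comes from the $\int|\partial_x u|^2+|\partial_x v|^2$ term (diagonal in the Fourier variables) and $P(\a,\b)=\sum_{\Mc(j,\ell)=0}\a_{j_1}\ov\a_{j_2}\b_{\ell_1}\ov\b_{\ell_2}$ is the quartic term coming from $\int|u|^2|v|^2$; note $P$ is already restricted by momentum conservation. I then split the quartic monomials appearing in $P$ according to whether the quadruple $(j_1,j_2,\ell_1,\ell_2)$ lies in the resonant set $\Rc$ (i.e. satisfies both $j_1-j_2+\ell_1-\ell_2=0$ and $j_1^2-j_2^2+\ell_1^2-\ell_2^2=0$) or not: $P = Z_4 + P^{nr}$, with $Z_4=\sum_{\Rc}\a_{j_1}\ov\a_{j_2}\b_{\ell_1}\ov\b_{\ell_2}$. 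Since a quartic monomial $\a_{j_1}\ov\a_{j_2}\b_{\ell_1}\ov\b_{\ell_2}$ Poisson-commutes with $N$ precisely when $j_1^2-j_2^2+\ell_1^2-\ell_2^2=0$, and those already in $P$ have $\Mc=0$, the set of resonant monomials in $P$ is exactly indexed by $\Rc$, giving $\{Z_4,N\}=0$.

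Next I would solve the homological equation $\{N,\chi\} + P^{nr} = 0$. Writing $\chi=\sum_{(j,\ell)\notin\Rc,\ \Mc(j,\ell)=0}\,c_{j\ell}\,\a_{j_1}\ov\a_{j_2}\b_{\ell_1}\ov\b_{\ell_2}$, one gets $c_{j\ell} = i/(j_1^2-j_2^2+\ell_1^2-\ell_2^2)$ — well-defined because on the momentum-zero, non-resonant set the denominator is a nonzero integer, hence bounded below by $1$ in absolute value. This is the crucial point that makes the whole construction work without small-divisor difficulties: there is no need for Diophantine conditions, the resonance structure of $\partial_x^2$ on $\S^1$ gives integer divisors with a uniform lower bound on the complement of $\Rc$. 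I would then check that $\chi$ defines a smooth Hamiltonian on $B_\rho(2\eps)$ whose vector field $X_\chi$ satisfies $\|X_\chi(z)\|_\rho \le C\|z\|_\rho^3$: this follows from the $\ell^1$-algebra property of $\F_\rho$ (the weighted $\ell^1$ norm $\|\cdot\|_\rho$ is submultiplicative under convolution, so products of Fourier series stay controlled), combined with the uniform bound $|c_{j\ell}|\le 1$. Consequently the time-$t$ flow $\tau_t=\Phi^t_\chi$ is well-defined for $|t|\le 1$ from $B_\rho(\eps)$ into $B_\rho(2\eps)$ for $\eps$ small, is canonical, and satisfies $\|\tau_t(z)-z\|_\rho\le C_\rho\|z\|_\rho^2$, giving (iv).

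Finally I would compute $\ov H = H\circ\tau_1$ by Taylor-expanding the Lie series: $\ov H = N + P + \{N,\chi\} + \big(\{P,\chi\} + \tfrac12\{\{N,\chi\},\chi\}\big) + \cdots$. By the homological equation the quartic part collapses to $N + Z_4$, and all remaining terms, being iterated brackets of quartic-or-higher objects with the cubic-vector-field $\chi$, produce a vector field of order at least $5$ in $\|z\|_\rho$; collecting these into $R_6$ and estimating via the $\ell^1$-algebra property again yields $\|X_{R_6}(z)\|_\rho\le C\|z\|_\rho^5$ on $B_\rho(\eps)$, which is (iii). I expect the only genuinely delicate bookkeeping to be the quantitative estimates on the Lie-series remainder — showing that the tail of the Baker–Campbell–Hausdorff-type expansion is both convergent in $\F_\rho$ and of the claimed order uniformly on $B_\rho(\eps)$ — but since the denominators are bounded below by $1$ and $\|\cdot\|_\rho$ is an algebra norm, this is routine rather than subtle; the verification that the non-resonant denominators never vanish, i.e. the elementary number-theoretic fact that $\Mc=0$ together with $j_1^2-j_2^2+\ell_1^2-\ell_2^2\ne 0$ forces $|j_1^2-j_2^2+\ell_1^2-\ell_2^2|\ge 1$, is the conceptual heart of the statement.
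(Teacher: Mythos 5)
Your proposal is correct and follows essentially the same route as the paper, which simply invokes the one-step Birkhoff normal form of \cite{GV} (Proposition 2.1) and highlights exactly the point you identify as the heart of the matter: on the zero-momentum, non-resonant set the divisor $j_1^2-j_2^2+\ell_1^2-\ell_2^2$ is a nonzero integer, hence bounded below by $1$, so no small divisors occur. The construction of $\tau$ as the time-one flow of the solution $\chi$ of the homological equation, together with the $\ell^1$-algebra estimates on $\mathcal F_\rho$ for the flow and the Lie-series remainder, is precisely the argument of the cited reference.
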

The proof is similar to the proof of Proposition 2.1 in \cite{GV}: we essentially use that if $\mathcal M(j,\ell)=0$ and $(j,\ell)\notin \mathcal R$ then $|j_1^2-j_2^2+  \ell^2_1-  \ell^2_2|\geq 1$, i.e. there is no small divisors involved.\\
For the construction of a more general Birkhoff Normal Form  see \cite{BG06}. By abuse of notation, in the proposition and in the sequel, the new variables $(\alpha',\beta')=\tau^{-1}(\alpha,\beta)$ are still  denoted by $(\alpha,\beta)$.

\subsection{Description of the resonant normal form}

In this subsection we  study the resonant part of the normal form  given by Proposition \ref{NF}. Denote by
\begin{equation*}
I=\sum_{n\in \Z}|\alpha_{n}|^{2},\quad J=\sum_{n\in \Z}|\b_{n}|^{2}, \quad S=\sum_{n\in \Z}\a_{n}\ov  \b_{n}.
\end{equation*}

\begin{prop}\label{prop.Z6}
The polynomial $Z_{4}$ reads:
\begin{equation*}
Z_{4}=IJ+|S|^{2}-\sum_{n\in \Z}|\a_{n}|^{2}|\b_{n}|^{2}.
\end{equation*}
\end{prop}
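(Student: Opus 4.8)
The plan is to compute $Z_4$ directly from its defining sum over the resonant set $\mathcal{R}$ and recognise the three quadratic-in-action quantities $I$, $J$, $S$. First I would recall that
$$Z_4 = \sum_{(j_1,j_2,\ell_1,\ell_2)\in\mathcal R}\a_{j_1}\ov\a_{j_2}\b_{\ell_1}\ov\b_{\ell_2},$$
so the key is to understand the solution set of the two constraints $j_1-j_2+\ell_1-\ell_2=0$ and $j_1^2-j_2^2+\ell_1^2-\ell_2^2=0$. Writing the second constraint as $(j_1-j_2)(j_1+j_2)+(\ell_1-\ell_2)(\ell_1+\ell_2)=0$ and substituting $j_1-j_2=\ell_2-\ell_1$ from the first, one gets $(j_1-j_2)(j_1+j_2-\ell_1-\ell_2)=0$. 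Hence the resonant set splits into two (non-disjoint) families: either $j_1=j_2$, in which case the first constraint forces $\ell_1=\ell_2$; or $j_1+j_2=\ell_1+\ell_2$, which combined with $j_1-j_2=\ell_2-\ell_1$ gives $j_1=\ell_2$ and $j_2=\ell_1$.

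Next I would sum over each family. The family $\{j_1=j_2,\ \ell_1=\ell_2\}$ contributes
$$\sum_{j,\ell\in\Z}\a_j\ov\a_j\b_\ell\ov\b_\ell=\Big(\sum_j|\a_j|^2\Big)\Big(\sum_\ell|\b_\ell|^2\Big)=IJ.$$
The family $\{j_1=\ell_2,\ j_2=\ell_1\}$ contributes
$$\sum_{j_1,j_2\in\Z}\a_{j_1}\ov\a_{j_2}\b_{j_2}\ov\b_{j_1}=\Big(\sum_{j_1}\a_{j_1}\ov\b_{j_1}\Big)\Big(\sum_{j_2}\ov\a_{j_2}\b_{j_2}\Big)=S\,\ov S=|S|^2.$$
The only remaining bookkeeping is the overlap of the two families, namely the indices satisfying $j_1=j_2=\ell_1=\ell_2$, which have been counted twice; subtracting this overcount yields $-\sum_{n}|\a_n|^2|\b_n|^2$, and adding the three pieces gives $Z_4=IJ+|S|^2-\sum_{n\in\Z}|\a_n|^2|\b_n|^2$, as claimed.

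The only subtlety — and the step I would be most careful about — is the inclusion-exclusion over the two overlapping index families: one must check that a quadruple lies in \emph{both} families exactly when all four indices coincide (if $j_1=j_2$ and $j_1+j_2=\ell_1+\ell_2$ and $\ell_1=\ell_2$ then $j_1=\ell_1$, so indeed $j_1=j_2=\ell_1=\ell_2$), so that each such diagonal monomial $|\a_n|^2|\b_n|^2$ appears once in the $IJ$ sum and once in the $|S|^2$ sum and must therefore be subtracted once. Everything else is a routine factorisation of double sums into products of single sums, using the definitions of $I$, $J$ and $S$. No convergence issue arises on $\mathcal F_\rho$ since $(\a,\b)\in(\ell^1)^4$ makes all these sums absolutely convergent.
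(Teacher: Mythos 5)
Your proof is correct and follows essentially the same route as the paper, whose one-line argument ("$(j_1,j_2,\ell_1,\ell_2)\in\mathcal R$ iff $\{j_1,\ell_1\}=\{j_2,\ell_2\}$, and the result follows") is exactly your factorisation of the resonance conditions plus the inclusion--exclusion over the two overlapping families. You have simply written out the elementary computation the paper leaves implicit, including the correct single subtraction of the diagonal terms $|\a_n|^2|\b_n|^2$.
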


\begin{proof}
By an elementary computation, we know that $(j_1,j_2,\ell_1,\ell_{2})\in \mathcal{R}$ iff $\{j_{1},\ell_{1}\}=\{j_{2},\ell_{2}\}$ and the result follows.
\end{proof}

\section{The reduced model}\label{Sect3}
We want to describe the dynamics of the Hamiltonian system obtained by reducing   \eqref{hamsys} to the space   $$\mathcal J(p,q):=\big\{\;(\a,\b)\in\mathcal F_\rho\mid\a_{j}=\ov \a_{j}=\b_{j}=\ov \b_{j}=0 \text{ when }j\neq p,q\;\big\},$$ 
and we denote by $\wh H$ the reduced Hamiltonian, i.e.  
$$\wh{H}=H\big|_{\J(p,q)}.$$
After calculation we obtain
\begin{eqnarray*}
\wh{H}&=& p^{2}(I_{p}+J_{p})+q^{2}(I_{q}+J_{q})+(I_{p}+I_{q})(J_{p}+J_{q})+(\a_{p}\ov\a_{q}\ov\b_{p}\b_{q}+\ov\a_{p}\a_{q}\b_{p}\ov\b_{q})\\
&=&p^{2}(I_{p}+J_{p})+q^{2}(I_{q}+J_{q})+(I_{p}+I_{q})(J_{p}+J_{q})+2\big(I_{p}I_{q}J_{p}J_{q}\big)^{1/2}\cos (\psi_{0}),
\end{eqnarray*}
with $\psi_{0} =\theta_{q}-\theta_{p}+\phi_{p}-\phi_{q}$. \\
The Hamiltonian system associated to $\wh{H}$ is defined on the phase space $\Tc^4\times\R^4\ni (\theta_{p},\theta_{q},\phi_{p},\phi_{q};I_{p},I_{q},J_{p},J_{q})$ by
\begin{equation} \label{ham1}
\left\{\begin{aligned}
\dot \theta_{j}=&-\frac{\partial \wh{H}}{\partial I_{j}},\quad &\dot I_{j}=&\frac{\partial \wh{H}}{\partial \theta_{j}}, \quad &j=p,q,\\
\dot \phi_{j}=&-\frac{\partial \wh{H}}{\partial J_{j}},\quad &\dot J_{j}=&\frac{\partial \wh{H}}{\partial \phi_{j}}, \quad &j=p,q.
\end{aligned}\right.
\end{equation}

Since the Hamiltonian $\wh{H}$ only depends on one angle ($\psi_0$), the system \eqref{ham1} is completely integrable (this is also a consequence of  the invariance properties recalled in (\ref{mass})-(\ref{energy})).

\begin{lemm}\label{lem.int}
The system \eqref{ham1} is completely integrable. Moreover, the change of variables 
\begin{equation}\label{chgt0}
\left\{\begin{aligned}
&K_{1}=I_{q}+I_{p},\quad K_{2}=J_{q}+J_{p},\quad K_{3}=I_{q}+ J_{q},\quad  K_{0}=I_{q}\\
&\psi_{1}=\theta_{p}, \quad \psi_{2}=\phi_{p},\quad \psi_{3}=\phi_{q}-\phi_{p},\quad \psi_{0}=\theta_{q}-\theta_{p}+\phi_{p}-\phi_{q}
\end{aligned}\right.
\end{equation}
is symplectic: $\text{d}I\wedge \text{d}\theta+\text{d}J\wedge \text{d}\phi=\text{d} K\wedge \text{d}\psi$.
\end{lemm}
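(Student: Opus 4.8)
The plan is to verify the two claims in Lemma~\ref{lem.int}: complete integrability of \eqref{ham1}, and that the change of variables \eqref{chgt0} is symplectic. For the first point, the system \eqref{ham1} has four degrees of freedom, so it suffices to exhibit four independent Poisson-commuting first integrals. The Hamiltonian $\wh H$ is one of them. Since $\wh H$ depends on the angles only through the single combination $\psi_0=\theta_q-\theta_p+\phi_p-\phi_q$, any linear combination of the actions $I_p,I_q,J_p,J_q$ whose associated angular derivative is orthogonal to the covector dual to $\psi_0$ will Poisson-commute with $\wh H$; concretely one checks that $I_p+J_p$ (total mass at mode $p$, up to relabelling), $I_q+J_q$, and $I_p+I_q$ (total $u$-mass) are all conserved — they correspond exactly to the invariants \eqref{mass}. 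These three actions together with $\wh H$ are functionally independent on an open dense set and pairwise commute (actions commute among themselves, and each commutes with $\wh H$ by the angle-count argument), so the system is Liouville integrable. Equivalently, one can simply invoke that after the coordinate change \eqref{chgt0} the Hamiltonian depends on $\psi_0$ only, so $K_1,K_2,K_3$ are cyclic and hence conserved.

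For the symplectic claim, the cleanest route is to produce a generating function, or better, to check directly that $\sum_j(\dd I_j\wedge \dd\theta_j + \dd J_j\wedge \dd\phi_j) = \sum_j \dd K_j\wedge\dd\psi_j$ (with $j$ ranging over the index set $\{0,1,2,3\}$ on the right and $\{p,q\}$ on the left). I would write the old actions in terms of the new ones by inverting the linear relations in \eqref{chgt0}: from $K_0=I_q$, $K_1=I_p+I_q$, $K_3=I_q+J_q$, $K_2=J_p+J_q$ we get $I_q=K_0$, $I_p=K_1-K_0$, $J_q=K_3-K_0$, $J_p=K_2-K_3+K_0$. Likewise the old angles: $\theta_p=\psi_1$, $\phi_p=\psi_2$, $\phi_q=\psi_2+\psi_3$, and $\theta_q=\psi_0+\theta_p-\phi_p+\phi_q=\psi_0+\psi_1-\psi_2+(\psi_2+\psi_3)=\psi_0+\psi_1+\psi_3$. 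Substituting and expanding both sides as sums of wedge products $\dd K_a\wedge\dd\psi_b$, the cross terms must cancel. This is a finite linear-algebra verification: the transformation on $(\mathrm{actions},\mathrm{angles})$ is block lower/upper triangular with the angle-block being the transpose-inverse of the action-block, which is precisely the condition for a linear map to be symplectic for this canonical form. I would present it as: the action change $K=AI$ (schematically) with $A$ the integer matrix read off \eqref{chgt0}, and the angle change $\psi = (A^{-1})^{T}\theta$, whence $\dd K\wedge\dd\psi = \dd(AI)\wedge\dd((A^{-1})^T\theta) = \dd I\wedge\dd\theta$ by the standard identity $\langle A x, (A^{-1})^T y\rangle = \langle x,y\rangle$.

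The only mild subtlety — and the place to be slightly careful rather than the genuine obstacle — is bookkeeping: matching the index conventions ($p,q$ versus $0,1,2,3$) and checking that the particular integer matrix in \eqref{chgt0} really does have its angle-part equal to the contragredient of its action-part. I expect this to come out on the nose by direct substitution, so I would simply display the inverted relations above and state that a direct computation of $\dd K\wedge\dd\psi$ using them reproduces $\dd I\wedge\dd\theta+\dd J\wedge\dd\phi$, leaving the one-line wedge-product expansion to the reader. Complete integrability then follows immediately since in the new variables $\wh H=\wh H(K_0,K_1,K_2,K_3,\psi_0)$ depends on a single angle, so $K_1,K_2,K_3$ and $\wh H$ are four commuting independent integrals.
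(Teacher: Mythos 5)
Your proof is correct. For the integrability claim it is essentially the paper's argument: the paper exhibits the commuting integrals $K_1=I_p+I_q$, $K_2=J_p+J_q$, $K_3=I_q+J_q$ (your three quantities $I_p+J_p$, $I_q+J_q$, $I_p+I_q$ span exactly the same space of linear combinations of the actions), notes that they are in involution with each other and with $\wh H$, and concludes by independence; your remark that in the new variables $\wh H$ depends on the single angle $\psi_0$, so that $K_1,K_2,K_3$ are cyclic, is precisely the observation the paper makes just before the lemma. The one place where you genuinely add something is the symplectic identity: the paper's proof does not address $\dd I\wedge\dd\theta+\dd J\wedge\dd\phi=\dd K\wedge\dd\psi$ at all (it is implicitly left as a direct computation), while your contragredient criterion is the correct and efficient way to check it: writing $K=AI$ and $\psi=B\theta$ with the integer matrices read off \eqref{chgt0}, one has $\dd K\wedge\dd\psi=\sum_{j,k}(A^{T}B)_{jk}\,\dd I_j\wedge\dd\theta_k$, so symplecticity is equivalent to $B=(A^{-1})^{T}$, and with your inverted relations ($I_q=K_0$, $I_p=K_1-K_0$, $J_q=K_3-K_0$, $J_p=K_2-K_3+K_0$, $\theta_p=\psi_1$, $\phi_p=\psi_2$, $\phi_q=\psi_2+\psi_3$, $\theta_q=\psi_0+\psi_1+\psi_3$) the matrix check does come out on the nose. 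So there is no gap; your write-up simply makes explicit a verification the paper asserts without proof.
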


\begin{proof}
It is straightforward to check that
\begin{equation*}
K_{1}=I_{q}+I_{p},\quad K_{2}=J_{q}+J_{p} \;\;\text{and}\;\;K_{3}=I_{q}+ J_{q},
\end{equation*} 
are constants of motion. 
Furthermore we verify 
\begin{equation*}
\cp{K_{1},\wh{H}}=\cp{K_{2},\wh{H}}=\cp{K_{3},\wh{H}}=0,
\end{equation*}
as well as
\begin{equation*}
\cp{K_{1},K_{2}}=\cp{K_{2},K_{3}}=\cp{K_{3},K_{1}}=0.
\end{equation*}
Moreover the previous quantities are independent. So $\wh{H}$ admits four integrals of motions that are independent and in involution and thus $\wh{H}$ is completely  integrable.
\end{proof}

In the new coordinates, the Hamiltonian $\wh{H}$ reads 
\begin{multline}\label{Hat.gen}
\wh{H}=\wh{H}(\psi_{0},K_{0},K_{1},K_{2},K_{3})\\
\begin{aligned}
&=p^{2}(K_{1}+K_{2}-K_{3})+q^{2}K_{3}+K_{1}K_{2}+ 2\big[K(K_{3}-K)(K_{2}-K_{3}+K)(K_{1}-K)\big]^{1/2}\cos\psi_{0}.
\end{aligned}
\end{multline}

 We set $K_{1}=K_{2}=K_{3}=\eps^{2}$, and we denote by  
\begin{equation*} 
\wh{H}_{0}(\psi_0,K_0):= \wh{H}(\phi_{0},K_{0},\eps^{2},\eps^{2},\eps^{2})=\eps^{2}(p^{2}+q^{2})+\eps^{4}+2K_{0}(\eps^{2}-K_{0})\cos \psi_{0}.
\end{equation*}
The evolution of $(\psi_0,K_0)$ is given by
\begin{equation*} 
\dot \psi_{0}=-\frac{\partial \wh{H}_0}{\partial K_{0}}, \qquad
\dot K_{0}=\frac{\partial \wh{H}_0}{\partial \psi_{0}}  . 
\end{equation*}
Then, we make the change of unknown 
\begin{equation}\label{cv}
\psi_{0}(t)=\psi(\eps^{2}t)\quad \text{and}\quad K_{0}(t)=\eps^{2}K(\eps^{2}t).
\end{equation}
An elementary computation shows that the evolution of $(\psi,K)$ is given by 
\begin{equation} \label{syst*} 
\left\{\begin{aligned}
\dot \psi=&-2(1-2K)\cos \psi=&-\frac{\partial {H_{\star}}}{\partial K}\\
\dot K=&-2K(1-K)\sin \psi=&\frac{\partial {H_{\star}}}{\partial \psi},
\end{aligned}\right.
\end{equation}
where
  \begin{equation*} 
H_{\star}=H_{\star}(\psi,K) =2K(1-K)\cos\psi.
\end{equation*}

 The dynamical system \eqref{syst*} is a pendulum whose phase portrait is drawn in Figure 1 and we easily deduce that 
 
 \begin{lemm}\label{prop.mar}~
 Let $\gamma>0$ arbitrary small, then the dynamical system \eqref{syst*} admits a periodic  orbit $\Gamma_\ga:=\{(\psi_\ga(t),K_\ga(t))\mid t\in\R\}$ of period  $2T_\ga$  satisfying $(\psi_\ga(0),K_\ga(0))=(0,\ga)$ and  $(\psi_\ga(T_\ga),K_\ga(T_\ga))=(0,1-\ga)$.
\end{lemm}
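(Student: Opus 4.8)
The plan is to realise $\Gamma_\gamma$ as a closed level curve of the Hamiltonian $H_\star(\psi,K)=2K(1-K)\cos\psi$, which is a first integral of \eqref{syst*}. First I would record the algebraic facts: $H_\star(0,\gamma)=2\gamma(1-\gamma)=:c_\gamma$, and since $1-2c_\gamma=1-4\gamma(1-\gamma)=(1-2\gamma)^2$, the level set $\mathcal C_\gamma:=\{H_\star=c_\gamma\}$ contains both $(0,\gamma)$ and $(0,1-\gamma)$. For $0<\gamma<1/2$ we have $c_\gamma\in(0,1/2)$, whereas the only critical values of $H_\star$ on $\Tc\times[0,1]$ are $1/2$ (at $(0,1/2)$), $-1/2$ (at $(\pi,1/2)$), and $0$ (at the four points $(\pm\pi/2,0)$, $(\pm\pi/2,1)$); hence $c_\gamma$ is a regular value, $\mathcal C_\gamma$ is a smooth compact $1$-manifold, and it contains no equilibrium of \eqref{syst*}.

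Next I would check that $\mathcal C_\gamma$ is connected, so that it is a single periodic orbit. Solving $2K(1-K)\cos\psi=c_\gamma$ forces $\cos\psi\geq 2c_\gamma>0$, i.e.\ $|\psi|\leq\psi^\ast_\gamma:=\arccos(2c_\gamma)<\pi/2$, and for each such $\psi$ one obtains the two values $K=\tfrac12\bigl(1\pm\sqrt{1-2c_\gamma/\cos\psi}\bigr)$, which coalesce at $K=1/2$ when $|\psi|=\psi^\ast_\gamma$; this traces out one closed loop, invariant under $(\psi,K)\mapsto(-\psi,K)$ and under $(\psi,K)\mapsto(\psi,1-K)$, meeting $\{\psi=0\}$ exactly at $K=\tfrac12(1\pm(1-2\gamma))$, that is at $(0,\gamma)$ and $(0,1-\gamma)$. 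A compact connected level curve of a first integral that carries no equilibrium is one periodic orbit; I then let $(\psi_\gamma(t),K_\gamma(t))$ be the solution of \eqref{syst*} with $(\psi_\gamma(0),K_\gamma(0))=(0,\gamma)$, whose trajectory is $\mathcal C_\gamma$ (the phase space $\Tc\times[0,1]$ being compact, this solution is global).

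Finally I would establish the ``half-period'' assertion by reversibility: since $\cos$ is even and $\sin$ odd, $\sigma:(\psi,K)\mapsto(-\psi,K)$ conjugates the flow of \eqref{syst*} to its time-reverse, and $\sigma$ fixes $\{\psi=0\}$ pointwise, in particular the points $(0,\gamma)$ and $(0,1-\gamma)$. Since $\dot\psi_\gamma(0)=-2(1-2\gamma)<0$, the orbit enters $\{\psi<0\}$, and by the description of $\mathcal C_\gamma$ it returns to $\{\psi=0\}$ for the first time at the other intersection point $(0,1-\gamma)$, say at time $T_\gamma>0$; applying $\sigma$ to this arc yields the complementary arc of $\mathcal C_\gamma$ in $\{\psi>0\}$, joining $(0,1-\gamma)$ back to $(0,\gamma)$ in the same time $T_\gamma$. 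Hence $\mathcal C_\gamma$ is run through with minimal period $2T_\gamma$ and $(\psi_\gamma(T_\gamma),K_\gamma(T_\gamma))=(0,1-\gamma)$, which is the claim. I expect no real obstacle here beyond organising the standard pendulum phase portrait of Figure~1; the only point requiring a little care is the connectedness of $\mathcal C_\gamma$, settled by the explicit resolution above. (For later use in Theorem \ref{thm1} one also reads off $T_\gamma=\int_\gamma^{1-\gamma}\bigl(4K^2(1-K)^2-c_\gamma^2\bigr)^{-1/2}\,\dd K$, which behaves like $|\ln\gamma|$ as $\gamma\to0^+$, the orbit then approaching the separatrix $\{H_\star=0\}\supset\{K=0\}\cup\{K=1\}$.)
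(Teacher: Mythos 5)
Your proof is correct and takes essentially the same route as the paper, which simply reads the lemma off the pendulum phase portrait of $H_{\star}$ (Figure 1); you supply the details (that $2\gamma(1-\gamma)$ is a regular value, connectedness of the level curve, the reversing symmetry $(\psi,K)\mapsto(-\psi,K)$) that the paper leaves implicit. Your closing formula for $T_\gamma$ also agrees with \eqref{integrale} after folding the integral by the symmetry $K\mapsto 1-K$.
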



\begin{figure} 
\begin{center}
\includegraphics[width=9cm]{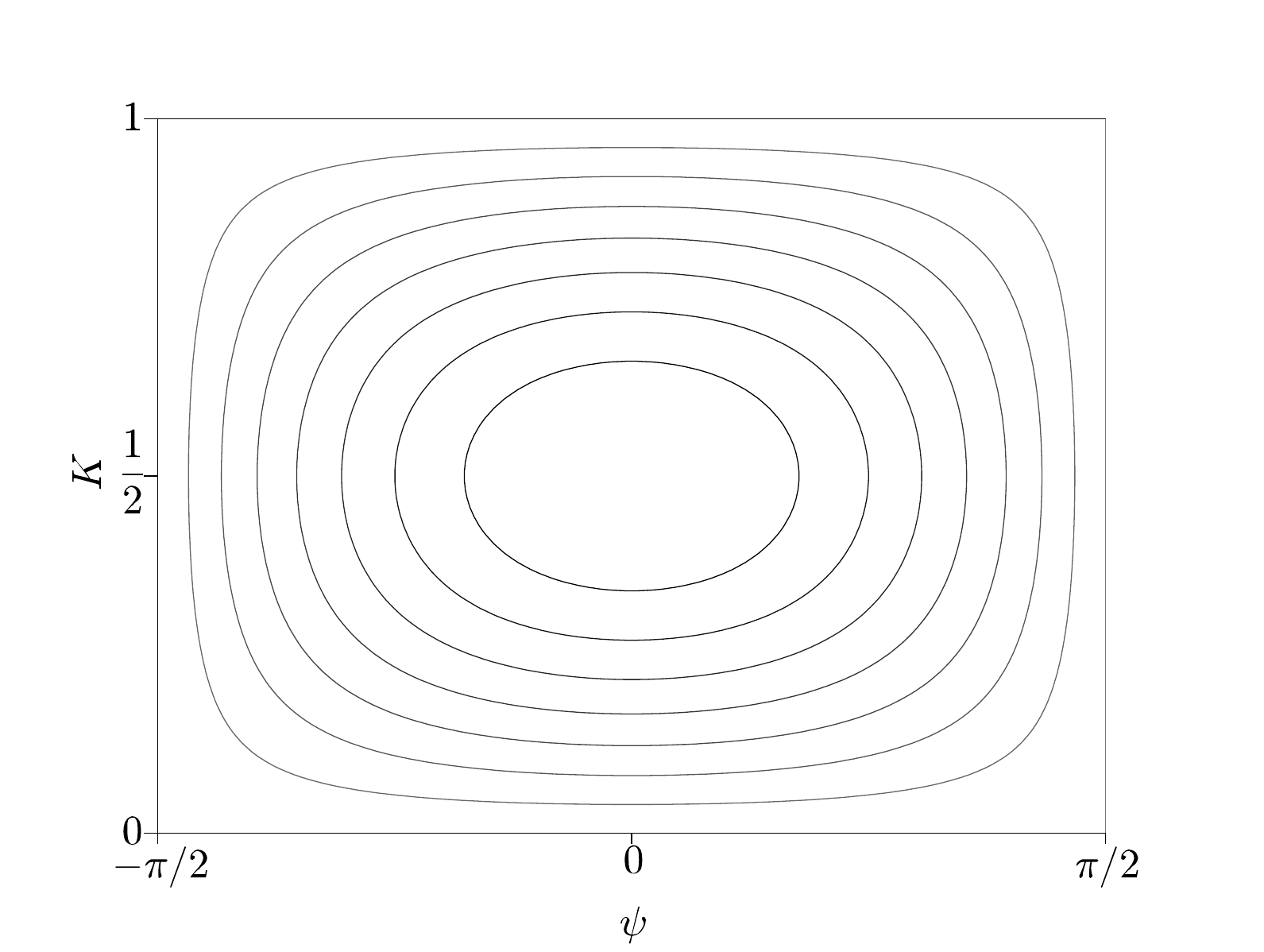}
\end{center}
\caption{Level sets of the Hamiltonian $H_\star$.}
\end{figure}
%
%

\begin{rema}\label{rema}
For the system \eqref{cauchy-}, we can perform a similar analysis, and we get the reduced Hamiltonian 
  \begin{equation*} 
H_{\star}=H_{\star}(\psi,K) = \frac2{\eps^{2}}(q^{2}-p^{2})K+2K(1-K)\cos\psi\,.
\end{equation*}
This Hamiltonian is of pendulum type for $\eps$ arbitrary small iff $q  =\pm p$,  and with  this choice only we are able to prove the same result as for \eqref{cauchy}.
\end{rema}

\section{Proof of  Theorem \ref{thm1}}\label{Sect4}
Consider the Hamiltonian $\ov{H}$ given by \eqref{ham*}, which is a function of $\big(\a_{j},\ov\a_{j},\b_{j},\ov\b_{j}\big)_{j\in \Z}$. We want to prove that, for a good choice of initial datum, the solution of the Hamiltonian system governed by $\ov H$ remains close to the solution of the reduced system governed by $\wh H$ (or $H_\star$).\\
 We make the linear change of variables given by  Lemma \ref{lem.int}. Then $\ov{H}$ induces the system
\begin{equation} \label{Bourg}
\left\{\begin{aligned}
\dot \psi_{j}=&-\frac{\partial \ov{H}}{\partial K_{j}} \\[4pt]
\dot K_{j}=&\frac{\partial \ov{H}}{\partial \psi_{j}}   
\end{aligned}\right., \quad 
 \left\{\begin{aligned}
i\dot {\a}_{k}=&\frac{\partial \ov{H}}{\partial \ov \a_{k}}, \quad & -i\dot {\ov \a}_{k}=&\frac{\partial \ov{H}}{\partial \a_{k}}\\[4pt]
i\dot{\b}_{k}=&\frac{\partial \ov{H}}{\partial \ov\b_{k}},\quad &-i\dot {\ov \b}_{k}=&\frac{\partial \ov{H}}{\partial \b_{k}}   
\end{aligned}\right.,\quad k\neq p,q. \qquad 
\end{equation}

Denote $\A=\{p,q\}$ and $\mathcal{L}= \Z \setminus \{\,p,q\,\}$. By Propositions \ref{NF} and \ref{prop.Z6} we  have
\begin{equation*}
\ov{H}=\wh{H}+R_{I}+Z_{4,2}+Z_{4,4}+R_{6},
\end{equation*}
where
\begin{equation*}
R_{I}=\sum_{j \in \mathcal{L}} (I_j +   J_j),
\end{equation*}
the polynomial $Z_{4,2}$ contains all fourth order monomials with 2 indices outside $\mathcal{A}$, and $Z_{4,4}$ contains all fourth order monomials with 4 indices outside $\mathcal{A}$. More precisely 
\begin{eqnarray*} 
Z_{4,2}&=& (I_p+I_q)\sum_{j \in \mathcal{L}} J_j + (J_p+J_q)\sum_{j \in \mathcal{L}}I_j +\\
&& +(\a_p \ov \b_p + \a_q \ov \b_q)\sum_{j \in \mathcal{L}} \ov \a_j \b_j+ (\ov \a_p \b_p + \ov \a_q \b_q) \sum_{j \in \mathcal{L}}\a_j \ov \b_j  \nonumber\\
Z_{4,4}&=& (\sum_{j \in \mathcal{L}} I_j) (\sum_{j \in \mathcal{L}} J_j) + (\sum_{j \in \mathcal{L}} \a_j \ov \b_j)(\sum_{j \in \mathcal{L}} \ov \a_j  \b_j)- \sum_{j \in \mathcal{L}} I_j  J_j\,.\nonumber
\end{eqnarray*}
Notice that $R_I$ vanishes when $I_k=0$ for all $k\notin \A$.\\
 
Observe that the $K_{j}$'s aren't constants of motion of \eqref{Bourg}. However, they are almost preserved, and this is the result of the next lemma.

 \begin{lemm}\label{lem.dyn}
 Assume that 
 \begin{equation}\label{assp}
 \a_{j}(0),\beta_{j}(0)=\mathcal{O}(\eps),\;\forall \,j\in\A\;\;\text{and}\;\;\a_{j}(0),\beta_{j}(0)=\mathcal{O}(\eps^{2}),\;\forall \,j\in\L.
 \end{equation}
Then for all $0\leq t \leq C\eps^{-3}$, 
 \begin{equation}\label{pp}
 I_{j}(t),J_{j}(t)=\mathcal{O}(\eps^{4})\;\;\text{when}\;\; j\in\L,
 \end{equation}
and
  \begin{eqnarray}
 K_{1}(t)&=&K_{1}(0)+\mathcal{O}(\eps^{6 })t\label{ip1} \\
 K_{2}(t)&=&K_{2}(0)+\mathcal{O}(\eps^{6 })t\label{ip2}\\
 K_{3}(t)&=&K_{3}(0)+\mathcal{O}(\eps^{6 })t,\label{ip3}
 \end{eqnarray}
 where the $K_{j}$'s are defined by \eqref{chgt0}.
 \end{lemm}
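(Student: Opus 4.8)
The plan is to control the evolution of the external actions and of the quasi-invariants $K_1,K_2,K_3$ by a bootstrap (continuity) argument, using that their time derivatives are given by Poisson brackets with $\ov H$, and that most terms in $\ov H$ either commute with them or are of high order in $\eps$.

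First I would set up the bootstrap. Assume that on a maximal interval $[0,T^*]$ with $T^*\leq C\eps^{-3}$ one has the a priori bounds $I_j(t),J_j(t)\leq M\eps^4$ for $j\in\L$ (with $M$ a large constant to be fixed), and $|K_i(t)-K_i(0)|\leq \eps^2/2$ for $i=1,2,3$; note that the initial data \eqref{assp} give these bounds at $t=0$ with room to spare, since $\sum_{j\in\L}(|\a_j(0)|^2+|\b_j(0)|^2)=\mathcal O(\eps^4)$ and $K_i(0)=\mathcal O(\eps^2)$. On this interval $z(t)=(\a,\ov\a,\b,\ov\b)$ stays in $B_\rho(C\eps)$, so Proposition \ref{NF}(iii) applies: $\|X_{R_6}(z)\|_\rho\leq C\eps^5$.

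Next, the core computation: estimate $\dot K_i=\cp{K_i,\ov H}$ and $\dot I_j,\dot J_j$ ($j\in\L$) term by term using the splitting $\ov H=\wh H+R_I+Z_{4,2}+Z_{4,4}+R_6$. For the $K_i$: $\cp{K_i,\wh H}=0$ by Lemma \ref{lem.int} (they are expressed only in internal modes, and $\wh H$ commutes with them); $\cp{K_i,R_I}=0$ since $R_I$ depends only on the external actions $I_k,J_k$, $k\in\L$, which Poisson-commute with internal-mode quantities; the term $\cp{K_i,Z_{4,2}}$ is the one that produces an actual contribution — here I would check that the internal-mode factors $(I_p+I_q),(J_p+J_q),(\a_p\ov\b_p+\a_q\ov\b_q)$ etc. give a bracket bounded by (internal size $\sim\eps^2$)$\times$(external size $\sim\eps^4$ for the action-type terms, or $\sim\eps^2\cdot\eps^2$ for the $S$-type cross terms $\sum_{j\in\L}\ov\a_j\b_j$ which are $\mathcal O(\eps^4)$ by Cauchy--Schwarz), so $\cp{K_i,Z_{4,2}}=\mathcal O(\eps^6)$; $\cp{K_i,Z_{4,4}}=\mathcal O(\eps^8)$ since $Z_{4,4}$ is purely external and quadratic in external actions; and $\cp{K_i,R_6}=\mathcal O(\eps^6)$ using that $K_i$ is smooth with $\nabla K_i=\mathcal O(\eps)$ on the ball and $\|X_{R_6}\|_\rho=\mathcal O(\eps^5)$. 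Summing, $|\dot K_i|\leq C\eps^6$, and integrating gives \eqref{ip1}--\eqref{ip3}, in particular $|K_i(t)-K_i(0)|\leq C\eps^6 t\leq C\eps^3\ll\eps^2/2$ for $t\leq C\eps^{-3}$, improving the bootstrap constant. For the external actions: $\dot I_j=\cp{I_j,\ov H}$; the key structural facts are that $\cp{I_j,N}=0$, that the only monomials in $Z_{4}$ touching mode $j\in\L$ come from $Z_{4,2}$ and $Z_{4,4}$ (no $Z_{4,1}$ term exists here, consistent with the Principle's hypothesis $Z_{p,1}=0$), and crucially that in $Z_{4,2}$ the external modes enter only through the quadratic forms $\sum_{j\in\L}J_j$, $\sum_{j\in\L}I_j$, $\sum_{j\in\L}\ov\a_j\b_j$, $\sum_{j\in\L}\a_j\ov\b_j$ — the first two commute with each $I_j,J_j$, and the bracket of $I_j$ with the $S$-type terms is $\mathcal O(\eps^2)\cdot\mathcal O(\eps^2)=\mathcal O(\eps^4)$ in size but (this is the point) it is $\mathcal O(\eps^4)$ times quantities that are themselves $\mathcal O(\eps^2)$, giving $|\dot I_j|=\mathcal O(\eps^6)+(\text{contribution from }R_6)$. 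After integrating over $t\leq C\eps^{-3}$ this yields $I_j(t)=I_j(0)+\mathcal O(\eps^3)$; since $I_j(0)=\mathcal O(\eps^4)$ one gets $I_j(t)=\mathcal O(\eps^3)$, which is weaker than \eqref{pp}. To reach the sharp $\mathcal O(\eps^4)$ one must exploit a second conserved (or almost-conserved) structure — e.g. the exact mass conservation $\sum_j(|\a_j|^2+|\b_j|^2)$ combined with the already-proven near-conservation of $K_1,K_2$, which pins $\sum_{j\in\L}(I_j+J_j)=\mathcal O(\eps^6 t)=\mathcal O(\eps^3)$ — or, more likely following \cite{GV,GT2}, a refined estimate using that the vector field $X_{Z_{4,2}}$ restricted to the external modes is, modulo the commuting action part, a \emph{linear} flow in the $\a_j,\b_j$ preserving $\sum_{j\in\L}(|\a_j|^2+|\b_j|^2)$ (this is the content of the Principle's hypothesis $\cp{N^{ext},Z_{p,2}}=0$ guaranteeing $Z_{4,2}$ does not pump energy into the external block). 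I would phrase the external estimate so that the leading $\mathcal O(\eps^6 t)$ growth affects only the \emph{individual} $I_j$ while the \emph{total} external mass stays $\mathcal O(\eps^4)$ uniformly, and close the bootstrap on $M$.

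The main obstacle is exactly this last point: getting \eqref{pp} with the correct power $\eps^4$ rather than the crude $\eps^3$ coming from naive integration over a time $\eps^{-3}$. The naive bound $|\dot I_j|=\mathcal O(\eps^6)$ integrated over $[0,\eps^{-3}]$ only gives $\mathcal O(\eps^3)$, which is too weak to re-close the assumption $I_j=\mathcal O(\eps^4)$; one genuinely needs the oscillatory/structural cancellation (no secular transfer into the external modes at leading order, i.e. the term driving $\dot I_j$ is a perfect $\psi$-derivative of something bounded, or is killed by the conservation of total external mass together with \eqref{ip1}--\eqref{ip3}). Everything else — the bracket bookkeeping, the smallness of $R_6$, the Cauchy--Schwarz bounds on the $S$-type sums, the continuity argument — is routine once the splitting of $\ov H$ and Proposition \ref{NF} are in hand.
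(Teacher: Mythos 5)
Your handling of \eqref{ip1}--\eqref{ip3} is essentially the paper's argument: one checks that the internal quartic part $Z_4^{e}$ Poisson-commutes with each $K_i$, so that $\dot K_i=\cp{K_i,Z_{4,2}}+\cp{K_i,R_6}=\mathcal O(\eps^6)$, and integrates. The genuine gap is in \eqref{pp}, and you have diagnosed it yourself without closing it. The paper's mechanism is an exact, mode-by-mode cancellation in the \emph{whole} resonant part: from $Z_4=IJ+|S|^2-\sum_n|\a_n|^2|\b_n|^2$ one computes $\cp{I_n,Z_4}=-i(\ov\a_n\b_n S-\a_n\ov\b_n\ov S)=-\cp{J_n,Z_4}$, so that $L_n:=I_n+J_n$ satisfies $\dot L_n=\cp{L_n,R_6}$ \emph{exactly}; there is no "leading order'' cancellation to argue, the quartic normal form simply does not move $L_n$. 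Your option (i) (total mass plus \eqref{ip1}--\eqref{ip2}) yields only $\mathcal O(\eps^3)$, as you concede, and is moreover circular, since the $\mathcal O(\eps^6)$ bound on $\dot K_i$ already presupposes the external bound \eqref{pp}; your option (ii) points at the correct structure (the Hermitian, hence norm-preserving, linear action of $Z_{4,2}$ on each external pair $(\a_j,\b_j)$, i.e. the Principle's hypothesis $\cp{N^{ext},Z_{p,2}}=0$), but you never verify it and leave it as a conjectural alternative rather than the proof.

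Second, and this is the step on which your argument would still fail even after granting that cancellation: you never estimate the $R_6$ contribution to the external actions --- you write $|\dot I_j|=\mathcal O(\eps^6)+(\text{contribution from }R_6)$ and stop. A generic bound $\cp{L_n,R_6}=\mathcal O(\eps^6)$ integrated over $t\le \eps^{-3}$ again gives only $\eps^3$, and the bootstrap at level $\eps^4$ does not close. The paper's point is that for $n\in\L$ every monomial of $\cp{L_n,R_6}$ is of degree $6$ and contains at least one mode in $\L$, hence is $\mathcal O(\eps^{2+5})=\mathcal O(\eps^{7})$ under the bootstrap hypothesis \eqref{pp} combined with the a priori bound $I_n(t),J_n(t)=\mathcal O(\eps^2)$ for all $n$ and all $t$ (which follows from the exact $L^2$ conservation in each equation together with \eqref{assp}, not from a bootstrap on the $K_i$); then $|L_n(t)|=\mathcal O(\eps^4)+t\,\mathcal O(\eps^7)=\mathcal O(\eps^4)$ for $t\le C\eps^{-3}$, and \eqref{pp} follows since $0\le I_n,J_n\le L_n$. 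So the two missing items are precisely the identity $\cp{I_n+J_n,Z_4}=0$ and the extra power of $\eps$ in the $R_6$ term forced by the presence of an external mode; the rest of your bookkeeping (the $K_i$ estimates, the smallness of $R_6$ as a vector field, the continuity argument) is sound.
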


 \begin{proof}
 We first remark that by the preservation of the $L^2$ norm in each NLS equation, we have
 $$\sum_{j\in\Z}I_j(t) =\sum_{j\in\Z}I_j(0)\quad \text{and}\quad \sum_{j\in\Z}J_j(t) =\sum_{j\in\Z}J_j(0)\quad \text{ for all }t\in \R,$$
 and therefore by using \eqref{assp}
\begin{equation*} 
{I_{n}}(t)=\mathcal O (\eps^2),  {J_{n}}(t)=\mathcal O (\eps^2)\quad \text{for all } n\in\Z \text{ and for all }  t\in \R.
\end{equation*}
 On the other hand by Propositions \ref{NF} and \ref{prop.Z6}, we have for $n\in \Z$
\begin{equation}\label{eq0}
\dot{I_{n}}=\cp{I_{n},\ov{H}}=\cp{I_{n},Z_{4}}+\cp{I_{n},R_{6}},
\end{equation}
 and the same for $J_{n}$.

$\bullet$ To prove \eqref{pp}, we compute 
\begin{equation}\label{commut}
 \cp{I_{n},Z_{4}}=-i(\ov\a_{n}\b_{n}S-\a_{n}\ov \b_{n} \ov S),
\end{equation}
 and $\dis \cp{J_{n},Z_{4}}=i(\ov\a_{n}\b_{n}S-\a_{n}\ov \b_{n} \ov S)$. Then by \eqref{eq0}, if we denote by $L_{n}=I_{n}+J_{n}$ we get $$\dis \dot L_{n}=\cp{L_{n},R_{6}}.$$
Furthermore, for $n\neq p,q$ all the monomials appearing in $\cp{L_{n},R_{6}}$ are of order 6 and contains at least one mode in $\L$.
Therefore as soon as \eqref{pp} remains valid, we have $\dis \dot{L}_{n}(t)=\mathcal O(\eps^{2+5})$ and thus $|{L}_{n}(t)| = \mathcal O(\eps^{4})+t\ \mathcal O(\eps^{7}).$ We then conclude by a classical bootstrap argument that \eqref{pp} holds true for $t\leq C \eps^{-3}$.

$\bullet$ It remains to prove \eqref{ip1}-\eqref{ip3}. We denote by
$$Z^{e}_{4} = (I_{p}+I_{q})(J_{p}+J_{q})+(\a_{p}\ov\a_{q}\ov\b_{p}\b_{q}+\ov\a_{p}\a_{q}\b_{p}\ov\b_{q})\,,$$
the fourth order part of the model Hamiltonian. From \eqref{commut}, we get 
\begin{eqnarray*}
 \cp{I_{p},Z^{e}_{4}}&=&-i(\ov\a_{p}\b_{p}\a_{q}\ov \b_{q}-\a_{p}\ov \b_{p}\ov \a_{q}\b_{q})\\
 &=& -\cp{I_{q},Z^{e}_{4}}=- \cp{J_{p},Z^{e}_{4}}= \cp{J_{q},Z^{e}_{4}},
\end{eqnarray*}
thus $\cp{K_{1},Z^{e}_{4}} =\cp{I_{p}+I_{q},Z^{e}_{4}}=0$. Similarly, $\cp{K_{2},Z^{e}_{4}}=0$ and $\cp{K_{3},Z^{e}_{4}}=0$.
 Therefore, by using \eqref{eq0}  we deduce that for all $j\in \big\{ 1,2,3 \big\}$
 \begin{equation}\label{eqx}
 \dot{K}_{j}=\cp{K_{j},Z_{4,2}}+\cp{K_{j},R_{6}}.
 \end{equation}
 Then we use that each monomial of $Z_{4,2}$ contains at least two terms with indices $j\in \L$. Therefore, as soon as \eqref{assp} holds,  $|\cp{K_{j},Z_{4,2}}|\leq C\eps^{6}$. Furthermore $|\cp{K_{j},R_{6}}|\leq C\eps^{6}$. Therefore, by \eqref{eqx},
 $$K_{j}(t)= K_{j}(0)+t\ \mathcal{O}(\eps^{6}).$$
 \end{proof} 
 From now, we   fix the initial conditions
 \begin{equation} \label{inic}
\begin{array}{l}
K_{1}(0)=\eps^{2},\;\;K_{2}(0)=\eps^{2},\;\;K_{3}(0)=\eps^{2}, \\[4pt]
\text{and} \;\;|\alpha_{j}(0)|, \; |\bar{\alpha}_j(0)|, \; |\beta_j(0)|,\; |\bar{\beta}_{j}(0)| \leq C\eps^{2}\;\; \text{for} \;\;j\neq p,q.
\end{array} 
\end{equation}

 Let  $\ov{H}$  be given by \eqref{ham*}. Then according to the result of Lemma \ref{lem.dyn} which says that for a suitable long time we remain close to the regime of Section \ref{Sect3}, we hope that we can write $\ov{H}=\wh{H}_{0}+R$, where $R$ is an error term which remains small for times $0\leq t\leq \eps^{-3}$.
 
 We focus on the motion of $(\psi_{0},K_{0})$  and, as in the previous section, we make the change of unknown
\begin{equation}\label{renormal}
\psi_{0}(t)=\psi(\eps^{2}t)\quad \text{and}\quad K_{0}(t)=\eps^{2}K(\eps^{2}t),
\end{equation}
and we work with the scaled time variable $\tau=\eps^{2}t$. Then we can state 
\begin{prop} Consider the solution of \eqref{Bourg} with the initial conditions \eqref{inic}. Then $(\psi,K)$ defined by \eqref{renormal} satisfies for $0\leq \tau \leq \eps^{-1}$
\begin{equation} \label{syst.prop}
\left\{\begin{array}{rr}
\dot \psi=&-\frac{\partial H_{\star}}{\partial K}+\mathcal{O}(\eps^{2})\\[5pt]
\dot K=&\frac{\partial H_{\star}}{\partial \psi} +\mathcal{O}(\eps^{2})  ,
\end{array}\right. 
\end{equation}
where $H_{\star}$ is the Hamiltonian 
$$ H_{\star}=2K(1-K)\cos\psi. $$
\end{prop}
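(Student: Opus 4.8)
The plan is to read off the Hamilton equations for the single conjugate pair $(\psi_0,K_0)$ from \eqref{Bourg}, to identify which parts of $\ov H=\wh H+R_I+Z_{4,2}+Z_{4,4}+R_6$ actually influence this pair, and then to rescale according to \eqref{renormal}. Since $\psi_0$ is built only from the internal angles $\theta_p,\theta_q,\phi_p,\phi_q$ and $K_0=I_q$ is an internal action, the purely external term $R_I$ and the term $Z_{4,4}$ (involving only external modes) are independent of $(\psi_0,K_0)$ and drop out. In $Z_{4,2}$, an elementary computation from \eqref{chgt0} shows that only the monomials $\a_q\ov\b_q$ and $\ov\a_q\b_q$ carry $\psi_0$, and $K_0=I_q$ enters only the $I_q$--linear monomials; so from \eqref{Bourg},
$$\dot K_0=\partial_{\psi_0}\wh H+\partial_{\psi_0}Z_{4,2}+\partial_{\psi_0}R_6,\qquad \dot\psi_0=-\partial_{K_0}\wh H-\partial_{K_0}Z_{4,2}-\partial_{K_0}R_6.$$

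The main term is the $\wh H$ one. Using \eqref{Hat.gen}, $\partial_{\psi_0}\wh H=-2G\sin\psi_0$ and $\partial_{K_0}\wh H=2(\partial_{K_0}G)\cos\psi_0$ with $G:=[K_0(K_3-K_0)(K_2-K_3+K_0)(K_1-K_0)]^{1/2}$. I would freeze $K_1=K_2=K_3=\eps^2$, turning $\wh H$ into $\wh H_0$ of Section \ref{Sect3}, and then control the error of this substitution. Here one needs the drift of $K_1,K_2,K_3$ to be $\mathcal O(\eps^4)$, uniformly on $0\le t\le \eps^{-3}$; the bound $\mathcal O(\eps^6)t$ of Lemma \ref{lem.dyn} is too crude on this interval. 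Instead I would use the \emph{exact} linear conservation laws of \eqref{cauchy44}, which persist in the normal form variables because the Birkhoff transformation $\tau$ preserves the mass and momentum gradings: conservation of $\int|u|^2$ and of $\int|v|^2$ gives $K_1=I_p+I_q=(\text{const})-\sum_{j\in\L}I_j=\eps^2+\mathcal O(\eps^4)$ and likewise $K_2=\eps^2+\mathcal O(\eps^4)$, while conservation of the momentum \eqref{moment} together with $p\neq q$ lets one solve a $2\times 2$ system for $I_p+J_p$ and $I_q+J_q$, yielding $K_3=I_q+J_q=\eps^2+\mathcal O(\eps^4)$; all of this rests on $\sum_{j\in\L}|j|\,(I_j+J_j)=\mathcal O(\eps^4)$, the $\mathcal F_\rho$--localization of the external modes, a consequence of \eqref{pp} propagated by a bootstrap. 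Feeding $K_i=\eps^2(1+\mathcal O(\eps^2))$ and $K_0=\eps^2K$ into $G$, and using that along the motion $K$ stays in a fixed compact subinterval of $(0,1)$ (so $\sqrt{K(1-K)}$ and its $K$--derivative stay bounded), one gets $G=\eps^4K(1-K)+\mathcal O(\eps^6)$ and $\partial_{K_0}G=\eps^2(1-2K)+\mathcal O(\eps^4)$, hence $\partial_{\psi_0}\wh H=-2\eps^4K(1-K)\sin\psi+\mathcal O(\eps^6)$ and $\partial_{K_0}\wh H=2\eps^2(1-2K)\cos\psi+\mathcal O(\eps^4)$.

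For the remaining terms, \eqref{pp} and the initial conditions \eqref{inic} give that the external actions and the sums $\sum_{j\in\L}\a_j\ov\b_j$, $\sum_{j\in\L}I_j$, etc., are $\mathcal O(\eps^4)$; combined with $|\a_q\ov\b_q|=\sqrt{I_qJ_q}=\mathcal O(\eps^2)$ this yields $\partial_{\psi_0}Z_{4,2}=\mathcal O(\eps^6)$ and $\partial_{K_0}Z_{4,2}=\mathcal O(\eps^4)$. For $R_6$, Proposition \ref{NF}~(iii) gives $\|X_{R_6}(z)\|_\rho\le C\|z\|_\rho^5$, so with $\|z\|_\rho=\mathcal O(\eps)$ one obtains $\partial_{\psi_0}R_6=\cp{K_0,R_6}=\mathcal O(\eps^6)$ (there is an extra factor $\eps$ from $K_0=I_q$) and $\partial_{K_0}R_6=\mathcal O(\eps^4)$. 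Finally I would plug in the rescaling \eqref{renormal}: since $\dot K_0=\eps^4\,dK/d\tau$ and $\dot\psi_0=\eps^2\,d\psi/d\tau$, dividing the two equations by $\eps^4$ and $\eps^2$ respectively gives precisely
$$\frac{dK}{d\tau}=-2K(1-K)\sin\psi+\mathcal O(\eps^2)=\frac{\partial H_\star}{\partial\psi}+\mathcal O(\eps^2),\qquad \frac{d\psi}{d\tau}=-2(1-2K)\cos\psi+\mathcal O(\eps^2)=-\frac{\partial H_\star}{\partial K}+\mathcal O(\eps^2),$$
which is \eqref{syst.prop}.

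The step I expect to be the main obstacle is the uniform $\mathcal O(\eps^4)$ control of the slow variables $K_1,K_2,K_3$: this is what, after the division by $\eps^4$, keeps the error at $\mathcal O(\eps^2)$ rather than $\mathcal O(\eps)$ — and an $\mathcal O(\eps)$ error, accumulated over the interval $\tau\le\eps^{-1}$ used later, would be $\mathcal O(1)$ and useless. Exploiting the exact mass and momentum conservation circumvents this, but it forces one to simultaneously propagate the exponential localization and $\mathcal O(\eps^2)$--size of the external modes (a continuity/bootstrap argument in $\mathcal F_\rho$), and to check that $K(\tau)$ does not leave a fixed compact part of $(0,1)$ during $0\le\tau\le\eps^{-1}$, so that none of the $\gamma$--dependent constants hidden in the $\mathcal O(\cdot)$'s blow up — which is exactly where the hypothesis tying $\eps$ to $\gamma$ enters.
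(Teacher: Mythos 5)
Your argument is correct and lands on the same rescaled system, but it treats the decisive estimate differently from the paper. The paper writes $\ov{H}=\wh{H}_{0}+R$ with $R=Q+R_{I}+Z_{4,2}+Z_{4,4}+R_{6}$, where $Q$ is the Taylor rest produced by freezing $K_{1},K_{2},K_{3}$ at $\eps^{2}$, and it controls $Q$ through the drift bound of Lemma \ref{lem.dyn}, $\Delta K_{j}=\mathcal{O}(\eps^{6})t$; the contributions of $Z_{4,2}$ and $R_{6}$ are estimated exactly as you do (for $R_{I}$ and $Z_{4,4}$ the paper gives $\mathcal{O}(\eps^{6})$ bounds where you simply observe that they do not depend on $(\psi_{0},K_{0})$ and drop out, which is cleaner). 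Your substitute for Lemma \ref{lem.dyn} --- exact conservation of the two masses and of the momentum in the normal-form variables, combined with a weighted version of \eqref{pp} --- gives $\Delta K_{1},\Delta K_{2},\Delta K_{3}=\mathcal{O}(\eps^{4})$ uniformly for $t\le\eps^{-3}$, hence an error in \eqref{syst.prop} which is $\mathcal{O}(\eps^{2})$ uniformly on $0\le\tau\le\eps^{-1}$. Taken literally, the paper's route yields $\Delta K_{j}=\mathcal{O}(\eps^{4}\tau)$ and therefore an $\mathcal{O}(\eps^{2}(1+\tau))$ contribution from the $Q$-term, which degrades to $\mathcal{O}(\eps)$ at the end of the window; since Lemma \ref{lem.compa} integrates a uniform $\mathcal{O}(\eps^{2})$ over $\tau\le\eps^{-1}$, your strengthening is precisely what the later argument uses, so this deviation is a genuine (and welcome) improvement rather than a detour.

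Two caveats to make your write-up airtight. First, the exact conservation of $\sum_{j}I_{j}$, $\sum_{j}J_{j}$ and $\sum_{j}j(I_{j}+J_{j})$ after the transformation $\tau$ of Proposition \ref{NF} holds because the generator of the Birkhoff step is built only from gauge-invariant, zero-momentum monomials; say this explicitly. Moreover your $K_{3}$ argument via \eqref{moment} needs $\sum_{j\in\L}|j|(I_{j}+J_{j})=\mathcal{O}(\eps^{4})$, which does not follow from the mode-by-mode bound \eqref{pp}: it requires the weighted bootstrap in $\mathcal{F}_\rho$ that you mention, run on the quantities $I_{j}+J_{j}$ (so as to exploit the cancellation $\{I_{j}+J_{j},Z_{4}\}=0$) together with the vector-field bound of Proposition \ref{NF}~(iii). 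Second, a small slip: in the variables \eqref{chgt0} one has $I_{p}=K_{1}-K_{0}$, $J_{q}=K_{3}-K_{0}$, $J_{p}=K_{2}-K_{3}+K_{0}$, so $K_{0}$ does not enter $Z_{4,2}$ only through the $I_{q}$-linear monomials; your bounds $\partial_{K_{0}}Z_{4,2}=\mathcal{O}(\eps^{4})$ and $\partial_{\psi_{0}}Z_{4,2}=\mathcal{O}(\eps^{6})$ nevertheless stand, because $I_{p}+I_{q}=K_{1}$ and $J_{p}+J_{q}=K_{2}$ are $K_{0}$-free and the residual $K_{0}$-dependence sits in square-root factors whose $K_{0}$-derivatives are $\mathcal{O}(1)$ only as long as $K$ stays away from $0$ and $1$ --- the $\gamma$-dependence of the constants that you already flag and that both you and the paper must close with the same continuity argument.
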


 \begin{proof}
  First recall that   $\dis \wh{H}=\wh{H}(\psi_{0},K_{0},K_{1},K_{2},K_{3})$ is the reduced Hamiltonian given by \eqref{Hat.gen}. By Propositions \ref{NF} and \ref{prop.Z6} we  have
\begin{equation}\label{413}
\ov{H}=\wh{H}+R_{I}+Z_{4,2}+Z_{4,4}+R_{6}.
\end{equation}
Thanks to  the Taylor formula there is $Q$ so that 
\begin{eqnarray}\label{415}
\wh{H}(\psi_{0},K_{0},K_{1},K_{2},K_{3})&=&\wh{H}(\psi_{0},K_{0},\eps^{2},\eps^{2},\eps^{2})+Q\nonumber \\
&=& \wh{H}_{0}+Q.
\end{eqnarray}
Thus, by \eqref{413} and \eqref{415} we have $\ov{H}=\wh{H}_{0}+R$ with 
$$R=Q+R_{I}+Z_{4,2}+Z_{4,4}+R_{6}.$$
By \eqref{Bourg}, $(\psi_{0},K_{0})$ satisfies the system
\begin{equation*} 
\left\{\begin{array}{rr}
\dot \psi_{0}(t)=&-\frac{\partial \ov{H}}{\partial K_{0}}(\psi_{0}(t),K_{0}(t),\dots) \\[5pt]
\dot {K}_{0}(t)=&\frac{\partial \ov{H}}{\partial \psi_{0}}(\psi_{0}(t),K_{0}(t),\dots)   ,
\end{array}\right. 
\end{equation*}
where the dots stand  for the dependance of the Hamiltonian on the other coordinates.  Then, after the change of variables \eqref{renormal} we obtain
\begin{equation*} 
\left\{\begin{array}{rr}
\dot \psi(\tau)=&-\frac1{\eps^{4}}\frac{\partial \ov{H}}{\partial K}(\psi(\tau),\eps^{2}K(\tau),\dots) \\[5pt]
\dot {K}(\tau)=&\frac1{\eps^{4}}\frac{\partial \ov{H}}{\partial \psi}(\psi(\tau),\eps^{2}K(\tau),\dots).
\end{array}\right. 
\end{equation*}
Now write  $\ov{H}=\wh{H}_{0}+R$ and observe that $\wh{H}_{0}(\psi,\eps^{2}K)=C_{\eps}+\eps^{4}H_{\star}(\psi,K)$. As a consequence, $(\psi,K)$ satisfies 
\begin{equation*} 
\left\{\begin{array}{rr}
\dot {\psi} =&-\frac{\partial H_{\star}}{\partial K}-\frac1{\eps^{4}}\frac{\partial R(\psi,\eps^{2}K,\dots)}{\partial K}\\[5pt]
\dot {K} =&\frac{\partial H_{\star}}{\partial \psi} +\frac1{\eps^{4}}\frac{\partial R(\psi,\eps^{2}K,\dots)}{\partial \psi}.
\end{array}\right. 
\end{equation*}
Thus it remains to estimate $\partial_{\psi} R(\psi,\eps^{2}K,\dots)$ and $\partial_{K} R(\psi,\eps^{2}K,\dots)$. Remark that $\psi$ and $K$ are dimensionless variables. Thus, if $P$ is a polynomial involving $k$ internal modes, $(\alpha_j,\bar{\alpha}_j,\beta_j,\bar{\beta}_j)_{j\in\A}$, and $\ell$ external modes, $(\alpha_j,\bar{\alpha}_j,\beta_j,\bar{\beta}_j)_{j\in\L}$, we have by using  Lemma \ref{lem.dyn}
$$\partial_{\psi} P(\psi,\eps^{2}K,\dots)=\mathcal{O}(\eps^{k+2\ell}), \quad \partial_{K} P(\psi,\eps^{2}K,\dots)=\mathcal{O}(\eps^{k+2\ell}).$$
As $R_I$ contains only monomials involving at least one external action $(I_k)_{k\notin \A}$ we get
\begin{eqnarray*}
\partial_{\psi}R_{I}(\psi,\eps^{2}K,\dots)&=\mathcal{O}(\eps^{6}),\quad \partial_{K}R_{I}(\psi,\eps^{2}K,\dots)&=\mathcal{O}(\eps^{6}),\\
\partial_{\psi}Z_{4,2}(\psi,\eps^{2}K,\dots)&=\mathcal{O}(\eps^{6}),\quad  \partial_{K}Z_{4,2}(\psi,\eps^{2}K,\dots)&=\mathcal{O}(\eps^{6}),\\
\partial_{\psi}Z_{4,4}(\psi,\eps^{2}K,\dots)&=\mathcal{O}(\eps^{6}),\quad  \partial_{K}Z_{4,4}(\psi,\eps^{2}K,\dots)&=\mathcal{O}(\eps^{6}),\\
\quad \partial_{\psi}R_{6}(\psi,\eps^{2}K,\dots)&=\mathcal{O}(\eps^{6}),\quad \partial_{K}R_{6}(\psi,\eps^{2}K,\dots)&=\mathcal{O}(\eps^{6}).
\end{eqnarray*}
On the other hand, by construction  $Q$ reads $P_1\Delta K_1+ P_2 \Delta K_2+P_{3}\Delta K_{3}$ where $P_1$, $P_2$ and $P_{3}$ are polynomials of order 1 in $K_0$, $K_1$, $K_2$, $K_{3}$ and $\eps^2$ while $\Delta K_j$ denotes the variation of $K_j$: $\Delta K_j= K_j-K_j(0)$. Using again Lemma \ref{lem.dyn}, we check that for $0\leq \tau\leq \eps^{-1}$
\begin{equation*}
 \quad \partial_{\psi}Q=\mathcal{O}(\eps^{2}),\quad \partial_{K}Q=\mathcal{O}(\eps^{2}),
\end{equation*}
hence the result.
\end{proof}

We now consider the solution $(\psi_\gamma,K_\gamma)$ of \eqref{syst*}, described in Lemma \ref{prop.mar}, which is issued from the initial condition $(\psi_\gamma,K_\gamma)(0) = (0,\gamma)$ for some $\gamma$ such that $\eps^{1/2} \leq \gamma \ll 1$ and we compare it with  the solution $(\psi,K)$ of \eqref{syst.prop} issued from the same initial datum:

\begin{lemm}\label{lem.compa}
For all $0\leq \tau \leq \eps^{-1}$ we have
\begin{equation}\label{11} 
 ({\psi},{K})(\tau)=(\psi_\ga,K_\ga)(\tau)+ \mathcal{O}(\eps^2)\tau \,.
\end{equation}
\end{lemm}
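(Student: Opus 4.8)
This statement is a continuous-dependence estimate, and the plan is to derive it by a Gronwall/bootstrap argument applied to the difference of the two trajectories, using the (near-)conservation of $H_{\star}$ to keep everything in a fixed region.

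First I would set $w(\tau)=\big(\psi(\tau)-\psi_{\ga}(\tau),\,K(\tau)-K_{\ga}(\tau)\big)$, lifting $\psi$ to $\R$ since the two solutions will stay close, and write $X_{H_{\star}}=\big(-\partial_{K}H_{\star},\,\partial_{\psi}H_{\star}\big)$ for the Hamiltonian vector field of $H_{\star}$. Subtracting \eqref{syst*} from \eqref{syst.prop} yields
\begin{equation*}
\dot w=X_{H_{\star}}(\psi,K)-X_{H_{\star}}(\psi_{\ga},K_{\ga})+\vec\rho(\tau),\qquad w(0)=0,
\end{equation*}
where $|\vec\rho(\tau)|\le C\eps^{2}$ for $0\le\tau\le\eps^{-1}$ by the preceding Proposition, and $w(0)=0$ because both solutions are issued from $(0,\ga)$.

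Next I would check that on the whole interval $[0,\eps^{-1}]$ the perturbed trajectory stays in a fixed compact set on which $X_{H_{\star}}$ is Lipschitz, with constant independent of $\ga$. Indeed $H_{\star}$ is an exact first integral of \eqref{syst*}, and along \eqref{syst.prop}
\begin{equation*}
\frac{\dd}{\dd\tau}H_{\star}(\psi,K)=\partial_{\psi}H_{\star}\,\dot\psi+\partial_{K}H_{\star}\,\dot K=\cp{H_{\star},H_{\star}}+\mathcal O(\eps^{2})=\mathcal O(\eps^{2}),
\end{equation*}
so $H_{\star}(\psi(\tau),K(\tau))=H_{\star}(0,\ga)+\mathcal O(\eps^{2}\tau)=2\ga(1-\ga)+\mathcal O(\eps)$ for $\tau\le\eps^{-1}$. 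Hence $(\psi,K)(\tau)$ remains within $\mathcal O(\eps)$ of the periodic orbit $\Gamma_{\ga}$, in particular inside a fixed neighbourhood $\mathcal D$ of $\S^{1}\times[0,1]$ (which contains every $\Gamma_{\ga}$ for $\ga$ small), on which the smooth field $X_{H_{\star}}$ is $L$-Lipschitz with $L$ independent of $\ga$; a short bootstrap — assume $|w|\le\eps^{1/2}$ on a maximal subinterval, then improve it — turns this into a rigorous a priori bound.

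Finally I would close the estimate. Writing $X_{H_{\star}}(\psi,K)-X_{H_{\star}}(\psi_{\ga},K_{\ga})=\mathbb A(\tau)\,w(\tau)$ with $\mathbb A(\tau)=\int_{0}^{1}DX_{H_{\star}}\big((\psi_{\ga},K_{\ga})(\tau)+\theta w(\tau)\big)\,\dd\theta$ a bounded $2\times 2$ matrix, $w$ solves $\dot w=\mathbb A(\tau)w+\vec\rho$, $w(0)=0$, so variation of constants gives $|w(\tau)|\le\int_{0}^{\tau}\|\Psi(\tau,s)\|\,|\vec\rho(s)|\,\dd s$, with $\Psi$ the resolvent of $\dot y=\mathbb A(\tau)y$. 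The main obstacle is that over the long time $\tau\sim\eps^{-1}$ one cannot use the crude Gronwall bound $\|\Psi(\tau,s)\|\le e^{L|\tau-s|}$, which is useless here: one has to exploit that \eqref{syst*} is a one-degree-of-freedom integrable (pendulum) system, so that, up to the $\mathcal O(|w|)$ error in $\mathbb A$, $\Psi$ is the linearisation of the unperturbed flow along the periodic orbit $\Gamma_{\ga}$, which — being conjugate, in action–angle coordinates adapted to $\Gamma_{\ga}$, to a pure shear — grows only polynomially in $|\tau-s|$ rather than exponentially. Feeding this control into the Duhamel integral together with $|\vec\rho|\le C\eps^{2}$ leads to the required estimate $|w(\tau)|\le C\eps^{2}\tau$ on $[0,\eps^{-1}]$, i.e. \eqref{11}; note that at the final time this is only $\mathcal O(\eps)$, hence negligible compared with $\ga$ since $\eps<\ga^{2}$, which is all that is needed in the proof of Theorem \ref{thm1}.
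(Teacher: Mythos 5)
Your set-up (the difference $w=(\psi-\psi_\ga,K-K_\ga)$, the forcing bound $|\vec\rho|\le C\eps^2$ from the Proposition, and the confinement of $(\psi,K)$ via near-conservation of $H_\star$) is fine, but the heart of the argument --- the bound on the resolvent $\Psi(\tau,s)$ of $\dot y=\mathbb A(\tau)y$ over times $\tau\sim\eps^{-1}$ --- is asserted rather than proved, and this is exactly where the proof breaks down. First, the ``polynomial growth'' of the variational flow along $\Gamma_\ga$ is not uniform in $\ga$: the orbit grazes the separatrix of the pendulum (this is why $T_\ga\sim|\ln\ga|$), and the shear coefficient of its monodromy is governed by the derivative of the period with respect to the energy $h\simeq 2\ga$, of size roughly $1/\ga$, with $\ga$ possibly as small as $\eps^{1/2}$. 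Second, the $\mathcal O(|w|)$ perturbation inside $\mathbb A$ cannot be dismissed ``up to'': absorbing it by a second Duhamel on top of a resolvent growing like $1+|\tau-s|$ produces factors of order $\exp\big(C\sup|w|\,\tau^2\big)$, which under your bootstrap hypothesis $|w|\le\eps^{1/2}$ is $\exp(C\eps^{-3/2})$ at $\tau=\eps^{-1}$ --- no control at all. Third, and decisively, even granting a uniform bound $\|\Psi(\tau,s)\|\le C(1+\tau-s)$, the Duhamel integral only yields $|w(\tau)|\le C\eps^{2}(\tau+\tau^{2})$, which is $\mathcal O(1)$ at $\tau=\eps^{-1}$, not the claimed $\mathcal O(\eps^{2})\tau$: a linearly growing resolvent integrated against a constant forcing inevitably gives a quadratic-in-$\tau$ error. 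So the final sentence ``feeding this control into the Duhamel integral leads to $|w(\tau)|\le C\eps^{2}\tau$'' does not follow from what precedes it.

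The paper's proof avoids estimating the linearized flow of \eqref{syst*} altogether. On a compact subset of $\U=(-\pi,\pi)\times(0,1)$ it introduces Arnold--Liouville action-angle coordinates $(L,\alpha)=\Phi(\psi,K)$ for $H_\star$, with $\dd\Phi$ and $\dd\Phi^{-1}$ bounded. In these coordinates \eqref{syst.prop} becomes $\dot L=\mathcal O(\eps^{2})$, $\dot\alpha=-\partial_L H_\star+\mathcal O(\eps^{2})$, while the reference orbit is exactly $\Phi(\psi_\ga,K_\ga)(\tau)=(L_\star,\om_\star\tau)$; one then integrates these scalar equations directly --- the action equation has no leading-order term, so errors accumulate only linearly --- and transports the estimate back through the bounded map $\Phi^{-1}$. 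If you wish to keep a difference/Gronwall formulation, it must be performed in these variables, where the comparison system is triangular (the action decouples) and the exponential amplification you are fighting never appears; in the original $(\psi,K)$ variables no resolvent estimate of the type you invoke will give the linear-in-$\tau$ accumulation required by \eqref{11}.
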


\begin{proof} Consider the system \eqref{syst*} and the open domain $\U= (-\pi,\pi)\times(0,1)$. By the Arnold Theorem (cf. \cite[p.113]{Arn}, see also   \cite[Lemma 4.3]{GT2}), this Hamiltonian system admits  action-angle coordinates $(L,\alpha) = \Phi(\psi,K)$ defined on $\U\setminus\{(0,0)\}$ by a $\mathcal{C}^1$ symplectic map $\Phi$ satisfying that uniformly  on any compact $\tilde\U\subset \U\setminus\{(0,0)\}$:
$$ \|\text{d} \Phi\| \leq C, \quad \|\text{d} \Phi^{-1} \| \leq C\,.$$
Then we obtain that for $0\leq \tau\leq \eps^{-1}$
\begin{eqnarray*} 
\frac{\text{d}}{\text{d}\tau}(L,\alpha)&=&\frac{\text{d}}{\text{d}\tau}\Phi(\psi,K)= \text{d}\Phi(\psi,K). (\dot \psi,\dot K)\\&=&\text{d}\Phi (\psi,K). (\frac{\partial H_{\star}}{\partial \psi},-\frac{\partial H_{\star}}{\partial K})+\mathcal{O}(\eps^{2})\\
&=&(\frac{\partial H_{\star}}{\partial \alpha},-\frac{\partial H_{\star}}{\partial L})+\mathcal{O}(\eps^{2})\\
&=&(0,-\frac{\partial H_{\star}}{\partial L})+\mathcal{O}(\eps^{2}).
\end{eqnarray*}
Therefore there exists $L_{\star}\in \R$ so that $L(\tau)=L_{\star}+\mathcal{O}(\eps^{2})\tau$ and if we define  
 $\omega_{\star}=-\frac{\partial H_{\star}}{\partial L}(L_{\star}) $, we obtain  $\dis \alpha(\tau)=\omega_{\star}\tau+ \mathcal{O}(\eps^{2})\tau$. Notice that by construction $\Phi(\psi_\ga(\tau),K_\ga(\tau))= (L_\star,\om_\star \tau)$ for all $\tau\in \R$. Next, as $\text{d}\Phi^{-1}$ is bounded, we get 
\begin{eqnarray*} 
 ({\psi},{K})(\tau)=\Phi^{-1}\big(L(\tau),\alpha(\tau)\big) &=&\Phi^{-1}\big(L_{\star},\omega_{\star}\tau\big)+ \mathcal{O}(\eps^{2})\tau\nonumber \\
 &=& (\psi_{\ga},K_{\ga})(\tau)+ \mathcal{O}(\eps^{2})\tau.
\end{eqnarray*}
With the choice $\eps^{1/2}\leq \gamma$, the remainder term in \eqref{11} is so that $|\mathcal{O}(\eps^{2})\tau|\ll \gamma\leq K_{\gamma}$ for $\tau\leq \eps^{-1}$. 
\end{proof}

\begin{proof}[Proof of Theorem  \ref{thm1} ]
As a consequence of Lemma \ref{lem.compa}, the solution of \eqref{Bourg}, with initial datum \eqref{inic} and $(\psi_0,K_0)(0) = (0,\eps^2\gamma)$, satisfies for $0\leq t\leq \eps^{-3}$
\begin{eqnarray*}
K_{0}(t)&=&\eps^{2}K_{\ga}(\eps^{2}t)+ \mathcal{O}(\eps^{6}t)  \\
\psi_{0}(t)&=&\psi_{\ga}(\eps^{2}t)+ \mathcal{O}(\eps^{4}t),
\end{eqnarray*}
and with the condition $\eps^{1/2}\leq \gamma$ we obtain \eqref{descr}.\\
We now compute the period $2T_\gamma$. From the expression of the Hamiltonian $H_\star$, we infer
$$ \dot K = 2 K (1-K) \sqrt{1 - \frac{h^2}{\big( 2 K (1-K) \big )^2}}\,,$$
where $h = 2 \gamma(1-\gamma)>0$.  Thanks to the symmetries of  $H_\star$, ${T_\gamma}/{2}$ is the travel time for the solution between $(0,\gamma)$ and $(-\cos^{-1}(2h), {1}/{2})$. In the interval $[0, {T_\gamma}/{2}]$, the function $K$ is strictly increasing, hence invertible, so we can write the time $t$ as a function of $K$, and this implies that
\begin{equation}\label{integrale}
T_\gamma  = 2 \int_\gamma^{\frac{1}{2}} \frac{dK}{ \sqrt{(2K(1-K))^2-h^2}}\,. 
\end{equation}
Next, we estimate $T_{\gamma}$.    It is easy to check that there exists $C>0$ such that for all $\gamma < K < \frac{1}{2}$, we have
$$ K^2(1-K)^2 - \gamma^2(1-\gamma)^2 \geq C(K^2-\gamma^2) \,.$$
Hence by integration in \eqref{integrale} we deduce that there exists $C>0$ so that
\begin{equation}\label{log}
T_\gamma \leq - C \ln \gamma\,.
\end{equation}

\end{proof}
\section{Proof of Theorem \ref{thm3}}\label{Sect5}

Fix $\alpha \geq 1$ and consider the system \eqref{cauchy}. We fix the set of internal modes: $p=0$, $q \gg 1$  and 
$$\eps:=\e^{-\frac{1}{2}q^{1/\alpha}}.$$ 
Then we consider the first equation in \eqref{cauchy} as a linear time-dependent Schr\"odinger equation 
\begin{equation*}
i\partial_t u+\partial_{x}^{2}u  +V_q(t,x)u=0,\quad
(t,x)\in\R\times \S^{1},
\end{equation*}
with potential $V_q=-\e^{-q^{1/\alpha}} |v|^{2}$.\\ 
The regularity properties of the solutions given in Theorem \ref{thm1} imply that, on $[0,\eps^{-3}]\times \S^{1} $ the function   $V_q$ is smooth in time  and   real analytic in space. 
In order to construct the sequence of initial conditions announced in Theorem \ref{thm3}, we have to ensure uniform bounds w.r.t. the integer $q \in \N$ in  the Gevrey class $\mathcal{G}_\alpha(\T)$, given in \eqref{gevrey}.  We have 
$$ V_q(t,x) = \e^{-q^{1/\alpha}}\big| v_0(t) + v_q(t) \e^{iqx} + \e^{-\frac{1}{4}q^{1/\alpha}}r_v(t,x) \big|^2\,,$$
where $r_v(t,.)$ is an analytic function, whose norm is uniformly bounded with respect to $ |t| \leq \eps^{-3}$ and $q \in \N$. We then compute the Fourier coefficients $\hat{V}_j$ of $V_q(t,.)$.

The dominant coefficients are labelled by the indices $0$, $q$ and $-q$: for them, we have 
$$ |\hat{V}_0| \leq \e^{-q^{1/\alpha}} ( |v_0|^2+|v_q|^2 + \e^{-\frac{1}{4}q^{1/\alpha}}R)\,,$$
$$ |\hat{V}_q| \leq \e^{-q^{1/\alpha}} (|v_0| |v_q| + \e^{-\frac{1}{4}q^{1/\alpha}}R')\,,$$
$$ |\hat{V}_{-q}| \leq \e^{-q^{1/\alpha}} (|v_0 ||v_q| + \e^{-\frac{1}{4}q^{1/\alpha}}R'')\,,$$
where $R, R', R''$ are uniformly bounded w.r.t. $q\in \N$ and $0\leq t\leq \eps^{-3}$. Since $v_0$ and $v_q$ stay (in modulus) between $0$ and $1$, estimate \eqref{gevrey} is obtained for these indices. 

The coefficients $\hat{V}_j$ for $j \neq -q,0,q$ decay much faster in general: using the analyticity of $r_v(t,.)$, and the fact that for every $q\in \N$ and $\ell\in \Z$ 
$$ \frac{5}{4}|q|^{1/\alpha} + \rho |\ell-q| \geq c |\ell|^{1/\alpha}\,,$$
 we obtain
 $$| \hat{V}_j| \leq C \e^{-c|j|^{1/\alpha}}\,.$$
Once again, this estimate is uniform w.r.t. $q \in \N$ and $0\leq t\leq \eps^{-3}$.

Choose initial conditions so that $(\psi(0),K(0))=(0,\gamma)$ for some $0<\gamma\ll 1$.  In order to apply the result of Theorem \ref{thm1} we must have     $T_\gamma \leq \eps^{-1}$. 
 Therefore  we impose 
$$ T_q := \frac{T_\gamma}{\varepsilon^2} < \frac{1}{\varepsilon^3}\,,$$
which leads to $\e^{-\frac{1}{2}q^{1/\alpha}} < \frac{C}{|\ln \gamma|}$ since $T_{\gamma}\leq C |\ln \gamma|$. We fix the   $H^s$-norm of the initial condition with the choice $\gamma = q^{-2s}$ (observe that for $q\gg1$, $\eps<\gamma^{2}$, so that we are in the conditions of application of Theorem \ref{thm1}), then the previous constraint becomes $\e^{-\frac{1}{2}q^{1/\alpha}} <  \frac{C}{2s \ln q}$, hence is satisfied for $q$ large enough. From \eqref{log}, we get
$$ T_q \leq C |\ln \gamma |\eps^{-2} = 2s C\e^{q^{1/\alpha}} \ln q\,.$$
Now, the growth rate of $\|u\|_{H^{s}}$ between $t=0$ and $t=T_q$ is bounded from below by $C'\gamma^{-1/2}$, where $C'$ is independent of $s$. So we have, by \eqref{uHs},
\begin{equation*}
\frac{\|u(T_q)\|_{H^s}}{\|u(0)\|_{H^s}} \geq C'\gamma^{-1/2} = C' q^s \geq \frac{C'}{(1 + 2\alpha s)^{\alpha s}}  (\ln T_q)^{\alpha s}\,.
\end{equation*}
Note that the constant $C_{\alpha,s}:= \frac{C'}{(1 + 2\alpha s)^{\alpha s}}$ goes to $0$ as $s$ or $\alpha$ goes to infinity.
\qed

\begin{rema}
If we choose a different $\eps$, as for example $\eps = \exp{(- (\ln q)^{1+\kappa})}$, with $\kappa >0$, we have that for all $s>0$

$$\forall\, q \in \N,\quad \forall \,t \in [0,T_q], \quad \|V_q(t,.)\|_{H^s} \leq C_{s,\alpha}\,,$$
and we obtain the growth 
\begin{equation*}
\frac{\|u(T_q)\|_{H^s}}{\|u(0)\|_{H^s}} \geq C \exp{\big(s(\ln T_q)^{1/(1+\kappa)}\big)}\,,
\end{equation*}
that is, a sub-polynomial growth.
\end{rema} 

\end{document}